\newtheorem*{theoremM}{Main Theorem}
\newtheorem{theorem}{Theorem}[section]
\newtheorem{lemma}[theorem]{Lemma}
\newtheorem{add}[theorem]{Addendum}
\newtheorem{corollary}[theorem]{Corollary}
\theoremstyle{definition}
\newtheorem{remark}[theorem]{Remark}
\newcommand{\field}[1]{\mathbb{#1}}
\newcommand{\D}{\mathbb D}
\newcommand{\R}{\field{R}}
\newcommand{\C}{\field{C}}
\newcommand{\CC}{\mathcal{C}}
\newcommand{\LL}{\mathcal{L}}
\renewcommand{\o}{\mathfrak {o}}
\newcommand{\su}{\mathfrak {su}}
\renewcommand{\u}{\mathfrak {u}}
\newcommand{\U}{\mathcal{U}}
\newcommand{\ie}{{\it i.e., }}
\renewcommand{\epsilon}{\varepsilon}
\renewcommand{\phi}{\varphi}
\renewcommand{\H}{\mathbb H}
\author{Andrey Gogolev$^\ast$ and Federico Rodriguez Hertz$^{\ast\ast}$}
\title[Blow-ups of partially hyperbolic dynamical systems]{Surgery for partially hyperbolic dynamical systems II. \\
Blow-up of a complex curve.}
\thanks{$^\ast$The first author was partially supported by NSF grant DMS-1823150 and Simons grant 427063.\\
$^{\ast\ast}$The second author was partially supported by NSF grant DMS-1900778}
\begin{document}

\begin{abstract} 
In this paper we use the blow-up surgery introduced in~\cite{G} to produce new higher dimensional partially hyperbolic flows. The main contribution of the paper is the {\it slow-down construction} which accompanies the {\it blow-up construction}. This new ingredient allows to dispose of a rather strong {\it domination assumption} which was crucial for results in~\cite{G}. Consequently we gain more flexibility which allows to construct new {\it volume-preserving} partially hyperbolic flows as well as new examples which are not fiberwise Anosov. The latter are produced by starting with the geodesic flow on complex hyperbolic manifold which admits a totally geodesic complex curve. Then by performing the slow-down first and the blow-up second we obtain a new (volume-preserving) partially hyperbolic flows.
\end{abstract}
\maketitle

\section{Introduction}

This paper is a sequel to~\cite{G} and familiarity with~\cite{G} would help the reader. We keep the introduction brief, still we will recall all definitions which are needed.

A flow $\varphi^t\colon M \to M$ is {\it partially hyperbolic} 
if the tangent bundle $TM$ splits into $Df$-invariant continuous subbundles 
$TM= E^{s}\oplus E^c \oplus E^{u}$ such that 
\begin{equation}
\label{def_ph_flow}
 \|D\varphi^t(v^s) \| <\lambda^t< \|D\varphi^t (v^c)\| < \mu^t< \|D\varphi^t(v^u) \|,\; t\ge 1,
\end{equation}
for some Riemannian metric $\|\cdot\|$, some $\lambda<1<\mu$ and all unit vectors $v^s\in E^s$, $v^c\in E^c$ and $v^u\in E^u$. Then it is clear that the generating vector field $\dot\phi$ lies in the center subbundle $E^c$.

An invariant submanifold $N\subset M$ is called an {\it Anosov submanifold} for $\phi^t$ if $TN=E^s\oplus \dot \phi \oplus E^u$. Note that then the flow $\phi^t_N$ is given by the restriction $\varphi^t|_N$ is an Anosov flow. Further, the flow $\phi^t\colon M\to M$ is called {\it locally fiberwise } at $N$ if a neighborhood of $ N$ can be smoothly identified with $\D^k\times  N$, where $\D^k=\{x\in\mathbb R^k: \|x\|<1\}$, in such a way that the restriction $\phi^t|_{\D^k\times  N}$ has the product form
\begin{equation}
\label{def_a}
 \varphi^t(x,y)=(a^t(x), \varphi^t_{ N}(y)),
\end{equation}
where $a^t$ is a linear hyperbolic saddle flow.

\begin{remark} Note that locally fiberwise assumption in this paper is weaker than the one in~\cite{G} as we no longer require $E^s\oplus E^u$ to be tangent to the $N$-fibers in the neighborhood $\D^k\times N$. Such weakening is crucial for the examples which we consider here. Also note that the locally fiberwise assumption implies that the normal bundle to $N$ is trivial. This assumption is not crucial for our argument, but it does simplify notation and calculations a lot.
\end{remark}

Now we can blow-up $M$ along $\{0\}\times N$ by replacing each point in $\{0\}\times N$ with the projective space of lines which pass through this point perpendicularly to $N$. The blown-up manifold $\hat M$ comes with a canonical {\it blow-down map} $\pi\colon\hat M\to M$ which collapses each projective space to its base point. The preimage $\pi^{-1}(\{0\}\times N)\simeq \R P^{k-1}\times N$ is called {\it the exceptional set.}   In smooth category, $\hat M$ is the result of replacing $\D^k\times N$ with $(\D^k\#\R P^k)\times N$. We will write $\tilde\D^k$ for $(\D^k\#\R P^k)$. If the flow $\phi^t\colon M\to M$ is locally fiberwise at $N$ then it induces a flow $\hat\phi^t\colon \hat M\to\hat M$ such that the diagram

\begin{equation}
\label{diag}
\xymatrix{
\hat M\ar_\pi[d]\ar^{\hat \phi^t}[r] & \hat M\ar_\pi[d]\\
M\ar^{\phi^t}[r] & M
}
\end{equation}
commutes. The induced flow $\hat\varphi^t\colon \hat M\to\hat M$ may or may not be partially hyperbolic. 

\begin{theoremM} 
\label{thm_main}
Let $\varphi^t\colon M\to M$ be a partially hyperbolic flow with $C^1$ invariant splitting $E^s\oplus E^c\oplus E^u$ and let $N\subset M$ be an invariant Anosov submanifold of $M$. Assume that the dynamics is locally fiberwise in a neighborhood of $N$. Let $\hat\varphi^t\colon \hat M\to\hat M$ the induced flow on $\hat M$. Then there exists a partially hyperbolic flow $\tilde\varphi^t\colon\hat M\to \hat M$ which coincides with $\hat\varphi^t$ outside of a neighborhood of the exceptional set.
\end{theoremM}

The Main Theorem builds up on the earlier work~\cite{G}. However, strictly speaking, it is not a generalization of the results in~\cite{G}. Indeed in~\cite{G} the author showed that the blown-up flow $\hat\phi^t$ is itself partially hyperbolic under more restrictive assumptions, most importantly the {\it domination assumption}, which assures that the Anosov submanifold is sufficiently fast compared to the center. In this paper we have fully disposed of the domination assumption and, most interestingly, the Main Theorem applies to examples when $\hat\phi^t$ is not partially hyperbolic. The proof of the Main Theorem relies on some tools developed in~\cite{G} but also develops different technology for controlling the returns. The key basic ingredient of the proof is the {\it slow-down} construction in the neighborhood of the Anosov submanifold which provides a remedy for absence of domination. Consequently, unlike results of~\cite{G}, the construction here can only be used for flows and not for diffeomorphisms. The benefit of the slow-down construction is that we can also produce volume preserving examples which was impossible with techniques of~\cite{G}.

We proceed to describe an application of our theorem in the setting of geodesic flows on compact complex hyperbolic manifolds. Let $M$ be a compact complex hyperbolic manifold of dimension $n$ (real dimension $2n$). One can realize $M$ as a quotient space of the complex 
hyperbolic space $\H_\C^n$ by an action of a cocompact lattice in the group of biholomorphic isometries, $\Gamma\subset SU(n,1)$. Assume that there exists a compact totally geodesic complex curve $N\subset M$. Then, up to conjugating lattice $\Gamma$, the embedding $N\subset M$ is induced by the first coordinate embedding $\H_\C^1\subset \H_\C^n$. Now consider the geodesic flow on the unit tangent bundle $\phi^t\colon T^1M\to T^1M$. We view $\phi^t$ as a partially hyperbolic flow with $\dim E^s=\dim E^u=1$. Because $N$ is totally geodesic, $\phi^t$ restricts to $T^1N$. We blow-up $T^1N\subset T^1M$. It is easy to see that the induced flow $\hat\phi^t\colon\widehat{T^1M}\to \widehat{T^1M}$ is not partially hyperbolic because it has periodic orbits with dominated splittings of different dimension signatures. Further, we can check (see Section~\ref{section_example}) that all other assumptions of Main Theorem are satisfied as well. Hence we obtain the following corollary.

\begin{corollary}
\label{cor_main}
Let $M$ be a compact complex hyperbolic manifold and let $N\subset M$ be a totally geodesic complex curve. Then the blow up $\widehat{T^1M}$ of $T^1M$ along $T^1N$ supports a partially hyperbolic flow $\tilde\varphi^t\colon\hat M\to \hat M$. Moreover, the flow $\tilde\varphi^t\colon\hat M\to \hat M$ can be chosen to be an arbitrarily $C^\infty$ small perturbation of $\hat\phi^t$.
\end{corollary}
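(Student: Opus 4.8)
The plan is to derive Corollary~\ref{cor_main} by applying the Main Theorem to the geodesic flow $\phi^t\colon T^1M\to T^1M$ with the invariant submanifold $T^1N$, so the work splits into verifying the three hypotheses and establishing the $C^\infty$-smallness refinement. For the partially hyperbolic structure: normalizing the curvature of $M$ to $[-4,-1]$, the stable bundle of $\phi^t$ decomposes $SU(n,1)$-equivariantly, hence smoothly, into a strong stable line $E^{ss}$ of Lyapunov exponent $-2$ (the one carried by the Jacobi field along $J\dot\phi$) and a weak stable bundle of exponent $-1$ and dimension $2n-2$, with the mirror picture on the unstable side; taking $E^s=E^{ss}$, $E^u=E^{uu}$, and $E^c$ the sum of $\dot\phi$ with the two weak bundles gives a $C^\infty$ dominated splitting with $\dim E^s=\dim E^u=1$ and $\dim E^c=4n-3$. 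For the Anosov submanifold hypothesis: a totally geodesic complex curve has constant curvature $-4$, so $\phi^t|_{T^1N}$ is the geodesic flow of a hyperbolic surface and is Anosov; and since $T_xN$ is a complex line, $J\dot\phi$ is tangent to $T^1N$, so $E^{ss}$ restricts along $T^1N$ to the stable line of the surface flow and $T(T^1N)=E^s|_{T^1N}\oplus\dot\phi\oplus E^u|_{T^1N}$.

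It remains to check that $\phi^t$ is locally fiberwise at $T^1N$; I would do this with the homogeneous model. The stabilizer of $\H^1_\C\subset\H^n_\C$ is $L=S(U(1,1)\times U(n-1))$, and the induced reductive structure identifies a neighborhood of $T^1N$ in $T^1M$ with $\D^k\times T^1N$, $k=4(n-1)$. Because $N$ has constant curvature, the ambient curvature operator $w\mapsto R(w,\dot\phi)\dot\phi$ restricted to the normal bundle $T_xN^\perp$ is constant (equal to $-\mathrm{id}$, as $J\dot\phi\perp T_xN^\perp$), so in a parallel frame the linearized normal (Jacobi) dynamics is independent of the base point and equals a fixed linear hyperbolic saddle flow $a^t$, namely a direct sum of $2(n-1)$ standard planar saddles; this gives the product form~\eqref{def_a}. (The normal bundle of $T^1N$ need not be literally trivial, but as remarked after~\eqref{def_a} this is inessential.) The details of all three verifications are the content of Section~\ref{section_example}. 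Granting them, the Main Theorem produces the desired partially hyperbolic $\tilde\phi^t$ on $\widehat{T^1M}=\hat M$ agreeing with $\hat\phi^t$ off a neighborhood of the exceptional set.

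Recall that $\hat\phi^t$ itself is not partially hyperbolic: over a periodic orbit sitting over a periodic orbit of $\phi^t|_{T^1N}$ and over a fixed line of the projectivized saddle in the \emph{expanding} eigenspace of $a^t$, the Lyapunov spectrum with multiplicities is $\{-2^{\,(2n-1)},0^{\,(2n-2)},1,2\}$, which forces $\dim E^s=2n-1$ in any partially hyperbolic splitting, whereas over a fixed line in the \emph{contracting} eigenspace the spectrum is $\{-2,-1,0^{\,(2n-2)},2^{\,(2n-1)}\}$, forcing $\dim E^s\le 2$; since $2n-1\ge 3$ no global index is possible. The slow-down is exactly what repairs this: weakening the hyperbolicity of $a^t$ near the exceptional divisor pushes the blow-up exponents strictly inside $(-2,2)$, leaving the $\pm 2$ directions inherited from $T(T^1N)$ dominant, so a global dominated splitting of index $(1,4n-3,1)$ becomes available — and realizing it, i.e.\ propagating the cone fields through the returns to the neighborhood of the exceptional set, now complicated by the weakness of the domination, is the content of the proof of the Main Theorem and the main obstacle. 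Finally, for the last assertion of the corollary I would take the slow-down to be a small reparametrization of the transverse saddle (which shrinks the blow-up exponents by an amount proportional to the reparametrization, still enough to make the $\pm 2$ directions dominant) and confine the blow-up to an arbitrarily thin tube around $T^1N$; then $\tilde\phi^t$ is $C^\infty$-close to $\hat\phi^t$, provided — and this I expect to be the delicate point specific to the corollary — the estimates in the proof of the Main Theorem can be made uniform as the reparametrization and the tube shrink.
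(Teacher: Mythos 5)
Your overall structure is right: verify the partially hyperbolic splitting, the Anosov submanifold property, and the locally fiberwise hypothesis, then invoke the Main Theorem. Two of the verifications are essentially the same as in the paper, but there are differences worth flagging in the other two.

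For the locally fiberwise hypothesis, the paper does not argue through curvature operators and parallel frames. It builds an explicit $\Gamma_1$-equivariant parametrization $p(\sigma,u)=W(n-1)\sigma u$ of a tube about $T^1N$ (with $\sigma=\exp A(v_1,v_2)$ ranging over a transversal $\Sigma\subset SU(n,1)$) and then computes $d_t\sigma(v_1,v_2)d_t^{-1}=\sigma(e^{-t}v_1,e^{t}v_2)$ directly in the group, so that the geodesic flow becomes the literal product $(e^{-t}v_1,e^tv_2,d_tu)$ on $\Sigma\times T^1N$. Your argument that the curvature operator on $T_xN^\perp$ is constant and therefore the linearized normal dynamics is a fixed saddle only controls the derivative of the flow \emph{along} $T^1N$; the locally fiberwise condition~(\ref{def_a}) is a nonlinear statement about the actual flow in an entire neighborhood, not its linearization. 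To close this gap you still need the homogeneous model and the explicit conjugation by $d_t$, which is precisely what the paper does. (Also, in this parametrization the normal bundle of $T^1N$ does come out trivial, because $W(n-1)\cap PSU(1,1)=\{\mathrm{Id}\}$, so there is no need to invoke the ``this assumption is inessential'' remark.)

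For the $C^\infty$-smallness claim, the ``confine the blow-up to an arbitrarily thin tube'' step would actually work against you: shrinking $\delta$ forces $\bar\rho$ to climb from $\rho_0$ to $1$ over a shorter interval, so the higher derivatives of $\rho$ scale like $(1-\rho_0)/\delta^{k}$, and the generator $\rho X + Y$ moves \emph{away} from $X+Y$ in $C^k$ for $k\ge 2$. The paper's mechanism is simpler and is the actual point of the corollary: in the complex hyperbolic setting $\lambda'^{-1}=\mu'=e$ and $\lambda^{-1}=\mu=e^2$, so the domination condition~(\ref{eq_domination}) reads $e^{-2\rho_0}>e^{-2}$, which holds for \emph{every} $\rho_0<1$. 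Hence $\rho_0$ can be taken arbitrarily close to $1$, and with the tube radius held fixed (not shrunk) the function $\rho$ can be chosen arbitrarily $C^\infty$-close to $1$, whence $\phi^t_\rho$ is $C^\infty$-close to $\phi^t$ and $\tilde\phi^t$ is $C^\infty$-close to $\hat\phi^t$. Your worry about uniformity of the Main Theorem's estimates as the slow-down weakens is a fair concern about what the paper leaves implicit, but the mechanism you would need is ``mild slow-down, fixed tube,'' not ``shrinking tube.''

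Your discussion of why $\hat\phi^t$ itself fails to be partially hyperbolic (the two incompatible dominated indices over fixed lines of the projectivized saddle) is correct and matches the paper's remark; it is useful context but is not part of the proof of the corollary.
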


Note that the if $\phi^t$ preserves a smooth volume $m$ then $\hat\phi^t$ preserves a smooth measure $\pi^*(m)$. However the density of $\pi^*(m)$ vanishes on the exceptional set. Nevertheless, following the idea of Katok and Lewis~\cite{KL}, we adapt our Main Theorem to the conservative setting. 

\begin{add}
\label{add}
Let $N\subset M$ and $\phi^t\colon M\to M$ be as in the Main Theorem. Assume that $\phi^t$ preserves a smooth volume $m$ which has product form in the neighborhood $\D^k\times N$; that is, $m|_{\D^k\times N}=vol\otimes vol_N$, where $vol$ is the standard Euclidean volume on $\D^k$ and $vol_N$ is a smooth $\phi^t|_N$-invariant volume on $N$. Then there exists a partially hyperbolic flow on $\hat M$ which preserves a smooth non-degenerate volume.
\end{add}
The following is a non-trivial corollary.
\begin{corollary}
\label{cor_main2}
Let $M$ be a compact complex hyperbolic manifold and let $N\subset M$ be a totally geodesic complex curve. Then the blow up $\widehat{T^1M}$ of $T^1M$ along $T^1N$ supports a volume preserving partially hyperbolic flow $\tilde\varphi^t\colon\widehat{T^1M}\to \widehat{T^1M}$. 
\end{corollary}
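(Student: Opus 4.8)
\emph{Sketch of proof.} The plan is to derive Corollary~\ref{cor_main2} from the Addendum applied to the geodesic flow $\varphi^t\colon T^1M\to T^1M$ and its invariant Anosov submanifold $T^1N$. From the proof of Corollary~\ref{cor_main} (see Section~\ref{section_example}) the hypotheses of the Main Theorem are already available: $\varphi^t$ is partially hyperbolic with a $C^\infty$ --- hence $C^1$ --- invariant splitting $E^s\oplus E^c\oplus E^u$ (here one uses that $M$ is locally symmetric, so the horospherical distributions of $T^1M$ are smooth), the curve $N$ is a closed hyperbolic surface so that $T^1N$ is an invariant Anosov submanifold, and $\varphi^t$ is locally fiberwise near $T^1N$. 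To invoke the Addendum it remains to produce a smooth $\varphi^t$-invariant volume on $T^1M$ which is a product in some locally fiberwise chart, and the natural candidate is the Liouville volume $m$.

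The first step is to describe the locally fiberwise chart in the homogeneous model. Lifting to the universal cover, $T^1\widetilde{M}\cong G/L$ with $G=SU(n,1)$ and $L=Z_K(A)\cong U(n-1)$, where $A=\{a_t\}$ is a Cartan subgroup; the geodesic flow is right translation by $A$ and $T^1\widetilde{N}$ is the orbit of the $SU(1,1)$-block $H'\subset G$. In the restricted root space decomposition $\mathfrak g=\mathfrak l\oplus\mathfrak a\oplus\mathfrak g_{\alpha}\oplus\mathfrak g_{2\alpha}\oplus\mathfrak g_{-\alpha}\oplus\mathfrak g_{-2\alpha}$ (with $\dim\mathfrak g_{\pm\alpha}=2n-2$, $\dim\mathfrak g_{\pm2\alpha}=1$) one has $\mathrm{Lie}(H')=\mathfrak a\oplus\mathfrak g_{2\alpha}\oplus\mathfrak g_{-2\alpha}$, while $V:=\mathfrak g_{\alpha}\oplus\mathfrak g_{-\alpha}$ is a complement to $T_{eL}(T^1\widetilde{N})$ in $T_{eL}(T^1\widetilde{M})=\mathfrak g/\mathfrak l$, i.e.\ the normal space to $T^1\widetilde{N}$. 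The map $(h,v)\mapsto h\exp(v)L$ is a diffeomorphism of a neighborhood of $H'\times\{0\}$ onto a tube around $T^1\widetilde{N}$, and the identity $h\exp(v)a_tL=(ha_t)\exp(\mathrm{Ad}(a_{-t})v)L$ shows that in this chart the geodesic flow becomes \emph{exactly} the product $(h,v)\mapsto\bigl(ha_t,\,\mathrm{Ad}(a_{-t})v\bigr)$ of the geodesic flow on $T^1\widetilde{N}\cong H'$ with the linear hyperbolic saddle flow $a^t:=\mathrm{Ad}(a_{-t})|_V$, which equals $e^{-ct}\mathrm{Id}\oplus e^{ct}\mathrm{Id}$ on $\mathfrak g_{\alpha}\oplus\mathfrak g_{-\alpha}$ for some $c>0$ and has equal stable and unstable dimensions $2n-2$. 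Rescaling the fiber so the tube becomes $\D^k\times T^1N$ with $k=4n-4$, and descending to $T^1M$, produces the locally fiberwise chart.

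The second step is to read off $m$ in this chart. Up to a constant factor, $m$ is the $G$-invariant volume on $G/L$; invariance under left translation by $H'$ (which acts on the chart by $(h,v)\mapsto(h'h,v)$) forces the density to be independent of $h$, so $m=\rho(v)\,dv\,dh$, where $dh$ is Haar measure of $H'\cong SL_2(\R)$ --- which is unimodular, hence $dh$ is invariant under the geodesic flow on $T^1\widetilde{N}$ --- and $\rho$ is smooth and positive. Since the stable and unstable multiplicities of $a^t$ coincide, $\det\bigl(\mathrm{Ad}(a_{-t})|_V\bigr)=1$, so Lebesgue measure $dv$ is $a^t$-invariant; hence $\varphi^t$-invariance of $m$ forces $\rho\circ a^{-t}=\rho$, that is, $\rho$ is a first integral of $a^t$. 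Therefore, in the locally fiberwise chart, $m$ equals the product $vol_V\otimes vol_{T^1N}$ of the smooth positive $a^t$-invariant density $vol_V:=\rho\,dv$ on $\D^k$ and the Liouville measure $vol_{T^1N}$ of $T^1N$.

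The remaining point --- which I expect to be the main obstacle --- is that $vol_V$ need not be the Euclidean volume: $\rho$ is genuinely non-constant (already for $n=2$ it is essentially a function of the Killing-form pairing between the $\mathfrak g_{\alpha}$- and $\mathfrak g_{-\alpha}$-components of $v$). I would dispose of this by composing the locally fiberwise chart with $\psi\times\mathrm{id}_{T^1N}$, where $\psi$ is a diffeomorphism of $\D^k$ fixing $0$, commuting with $a^t$, and with $\psi_*(vol_V)=vol$. Such $\psi$ exists because $a^t$ is volume-preserving with an infinite-dimensional centralizer in $\mathrm{Diff}(\D^k,0)$ --- the ring of $a^t$-invariant functions on $V$ is generated by the products of a $\mathfrak g_{\alpha}$-coordinate with a $\mathfrak g_{-\alpha}$-coordinate, and one may look for $\psi$ of the form $v\mapsto B(v)v$ with $B$ valued in the centralizer of $a^t$ in $GL(V)$ and depending only on these invariants, which reduces the equation $|\det D\psi|=\rho$ to an elementary (Moser-type) problem. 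Since $\psi$ commutes with $a^t$, the geodesic flow retains the product form $(a^t x,\varphi^t_{T^1N}y)$ while $m$ now restricts to $vol\otimes vol_{T^1N}$; the Addendum applies and yields a smooth non-degenerate volume preserved by a partially hyperbolic flow $\tilde\varphi^t\colon\widehat{T^1M}\to\widehat{T^1M}$, proving the corollary. (Alternatively, one may note that the proof of the Addendum uses the transverse volume only through smoothness and $a^t$-invariance, so it applies verbatim with $vol$ replaced by $vol_V$.) $\square$
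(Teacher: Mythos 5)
Your primary route is essentially the route the paper takes. The paper likewise observes that in the locally fiberwise parametrization the Liouville volume restricts to $\alpha(v_1,v_2)\,\omega_0\wedge vol_{T^1N}$ with $\alpha$ an $a^t$-invariant density that is genuinely non-constant, so the Addendum does not apply directly (this is flagged in Remark~\ref{remark2}). It then produces a diffeomorphism $h$ of a small disk commuting with $a^t$ and with $h_*(\alpha\omega_0)=\omega_0$ (Lemma~\ref{lemmata}), extends $h$ to be the identity near the boundary, and replaces $\phi^t$ by the conjugate $\bar\phi^t=\bar h\circ\phi^t\circ\bar h^{-1}$, to which the Addendum applies. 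Your proposal to compose the chart with $\psi\times\mathrm{id}_{T^1N}$, where $\psi$ commutes with $a^t$ and pushes $vol_V=\rho\,dv$ to $vol$, is exactly this step, phrased as a change of chart rather than a conjugacy. The paper proves the existence of such a $\psi$ by an $a^t$-equivariant Moser argument: choose the primitive $\eta$ of $\omega_1-\omega_0$ to be $a^t$-invariant (this is the only non-obvious step, done by an explicit formula), solve $\iota_{Y_s}\omega_s=\eta$ and integrate, and check $[X,Y_s]=0$. Your ansatz $\psi(v)=B(v)v$ with $B$ in the centralizer of $a^t$ and depending only on the $a^t$-invariants is a reasonable route toward the same thing, but as stated it is a sketch; you should be prepared to actually solve the resulting PDE for $B$, which is what the paper's equivariant Moser lemma does in a systematic way.

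However, your closing parenthetical is wrong. It is \emph{not} true that the proof of the Addendum uses the transverse volume only through smoothness and $a^t$-invariance. Remark~\ref{remark2} addresses exactly this: if the transverse density $\beta$ is non-constant, then after the Katok--Lewis change of smooth structure the pulled-back density in the chart $\Psi_i^{\textup{new}}$ acquires the factor
$$
\beta\bigl(f_\alpha(u_1,\ldots,u_{i-1},u_{i+1},\ldots,u_k)\,\|u_i\|^\alpha(u_1u_i,\ldots,u_i,\ldots,u_ku_i)\bigr),
$$
which, because $\alpha=-\frac{k-1}{k}<0$, is merely continuous (not $C^1$) across the exceptional set $\{u_i=0\}$ unless the Taylor coefficients of $\beta$ up to order $k$ vanish. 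So the constant-density hypothesis in the Addendum is essential, and the rectifying diffeomorphism $\psi$ (the paper's $h$) cannot be dispensed with. You should delete or correct that last sentence.
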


Finally we remark that similarly to~\cite[Section 3]{G} one can take multiple blow-ups as well as connected sums along Anosov submanifolds and produce partially hyperbolic diffeomorphisms on manifolds with even more complicated topology.

\section{The proof of the Main Theorem}
\label{section2}

\subsection{Outline of the proof}
The partially hyperbolic splitting $TM=E^s\oplus E^c\oplus E^u$ for $\phi^t\colon M\to M$ induces a splitting $T\hat M=\hat E^s\oplus\hat E^c\oplus \hat E^u$ which is invariant under $D\hat\phi^t\colon T\hat M\to T\hat M$. It can be checked in local coordinates that, because the partially hyperbolic splitting is $C^1$, the induced splitting $\hat E^s\oplus\hat E^c\oplus \hat E^u$ is continuous. Under and additional domination assumption on $\varphi^t$ at $N$ (and also a stronger locally fiberwise assumption) the latter splitting is partially hyperbolic and this situation was examined in~\cite{G}. However, in general, this splitting is not partially hyperbolic. To recover partial hyperbolicity we modify $\hat\phi^t$ in the neighborhood of the exceptional set. Recall that by the locally fiberwise assumption, in the neighborhood of $N$, the generator of the flow is given by
$$
\frac{\partial\phi^t}{\partial t}(x,y)=X(x)+Y(y),
$$
where $X$ is the vector field on $\D^k$ which generates the hyperbolic saddle $a^t$ and $Y$ is the generator of $\varphi^t_N$.  We consider a smooth bump function $\rho\colon\D^k\to N$ which is radially symmetric, that is, $\rho(x)=\bar\rho(\|x\|)$ where smooth function $\bar \rho$ verifies
\begin{enumerate}
\item $\bar\rho(s)=\rho_0<1$, for $s\le\delta$;
\item $\bar\rho$ is strictly increasing on $(\delta,2\delta)$ and $|\bar\rho'(s)|<1/\delta$ for $s\in(\delta,2\delta)$;
\item $\bar\rho(s)=1$ for $s\ge 2\delta$
\end{enumerate}
Here the constant $\rho_0$ only depends on the contraction and expansion rates of $D\phi^t$ along invariant subbundles. Constant $\delta$ will need to be chosen sufficiently small. 

Given such a bump function $\rho$ we replace the flow $\phi^t|_{\D^k\times N}$ with a new flow $\phi^t_\rho$ whose generator is given by a {\it slow-down of the saddle} $X$
\begin{equation}
\label{eq_slow_down}
\frac{\partial\phi^t_\rho}{\partial t}(x,y)=\rho X(x)+Y(y)
\end{equation}
Because $\rho=1$ on the boundary of $\D^k$ the flow $\phi^t_\rho$ extends to the rest of $M$ as $\phi^t$ and then the blown-up flow $\hat\phi^t_\rho$ is the posited partially hyperbolic flow. 

Now we briefly outline the proof of partial hyperbolicity before proceeding with a more detailed argument. First note that on the $\delta$-neighborhood of $N$ the flow $\phi^t_\rho$ is a direct product of the slow saddle $a^{\rho_0t}$ and $\phi^t_N$. Therefore, by choosing $\rho_0$ small enough, the domination condition of~\cite{G} holds on the $\delta$-neighborhood and the estimates provided in~\cite{G} yield partial hyperbolicity of $\hat\phi^t_\rho$ with respect to the splitting $T\hat M=\hat E^s\oplus\hat E^c\oplus\hat E^u$ on the $\delta$-neighborhood of the exceptional set. Also, by construction, $\hat\phi^t_\rho$ coincides with $\hat\phi^t$ outside the $2\delta$-neighborhood of the exceptional set. The main technical difficulty is that the splitting $\hat E^s\oplus\hat E^c\oplus\hat E^u$ does not remain invariant as orbits cross the transition region ($\delta\le s\le2\delta$). However, one can still consider cones centered at these non-invariant distributions and verify the Cone Criterion for partial hyperbolicity. 

In what follows we will only establish {\it the splitting into unstable and center-stable subbundles. } Roughly speaking, this follows from the fact that the damage done to the cones in the transition region ($\delta\le s\le2\delta$) is controlled uniformly (in $\delta$) thanks to the second property of $\bar\rho$ and the fact that orbits spend a bounded time of order $\delta$ in the transition region.  Because all our constructions are time-symmetric, repetitions of the arguments also yields a splitting into center-unstable and stable subbundles and hence full partial hyperbolicity. 

\subsection{Cones near the exceptional set}

We will need to introduce more notation in order to proceed with the precise description of the cones and the estimates. Denote by $\tilde \D^k_{<\delta}\times N$ the $\delta$-neighborhood of the exceptional set, that is, the preimage
$$
\pi^{-1}(\{x\in\D^k:\|x\|<\delta \}\times N)
$$
Denote by $TN= E^s_N\oplus E^c_N\oplus E^u_N$ the Anosov splitting of the restriction  $\phi^t_N$ ({\it i.e.,} $E^c_N=\dot\phi^t_N$) and by  $( E^s_N\oplus E^c_N\oplus  E^u_N)\oplus H$ the product splitting on $\tilde \D^k_{<\delta}\times N$. Given a small number $\omega>0$ define the cones on $\tilde \D^k_{<\delta}\times N$
\begin{equation}
\label{eq_cones}
\begin{split}
\CC^u_\omega(x,y)=\{v\in T_{(x,y)}(\tilde \D^k_{<\delta}\times N): \measuredangle(v, E^u_N)<\omega\}\\
\CC^{cs}_\omega(x,y)=\{v\in T_{(x,y)}(\tilde \D^k_{<\delta}\times N): \measuredangle(v, E^s_N\oplus  E^c_N\oplus H)<\omega\}
\end{split}
\end{equation}

\begin{remark} The splitting $E^s_N\oplus (E^c_N\oplus H)\oplus E^u_N$ coincides with the splitting $\hat E^s\oplus\hat E^c\oplus \hat E^u$ on the exceptional set only.
\end{remark}

Recall that $\lambda<1<\mu$ are the constants from the definition of partial hyperbolicity~(\ref{def_ph_flow}). Also let $\lambda'\in(\lambda, 1]$ and $\mu'\in [1,\mu)$ be the some constants for which we have
\begin{equation*}
c^{-1}(\lambda')^t\le \|Da^t (v)\|/\|v\| \le c(\mu')^t,
\end{equation*}
where $c>0$.\footnote{Constant $\mu'$ and $\lambda'$ can be chosen to be arbitrarily close to the ``outer" and ``inner" spectral radii of $a^t$ by choosing large $c>0$.} Here $a^t$ is the hyperbolic saddle given by the locally fiberwise structure~(\ref{def_a}) and $v\in T\D^k$. Now we pick a constant $\rho_0>0$ which enters the definition of the function $\rho$ in the previous subsection such that we have the following inequality 
\begin{equation}
\label{eq_domination}
\left(\frac{\lambda'}{\mu'}\right)^{\rho_0}>\max(\lambda, \mu^{-1})
\end{equation}
This is the {\it domination condition}~\cite[(2.3)]{G} on the flow $\phi_\rho^t$. This condition yields required estimates on the cones on $\tilde \D^k_{<\delta}\times N$ for the blown-up flow. (In this paper we will focus on the case when $\rho_0<1$ because otherwise, if domination condition holds with $\rho_0=1$, our Main Theorem was already established in~\cite{G}.)  Precisely, we have the following lemma.

\begin{lemma}
\label{lemma1}
There exist $\omega>0$, $c>0$, $\kappa>1$ and $\delta_0>0$ such that for all $\delta<\delta_0$ there exists a Riemannian metric $\|\cdot\|_\delta$ on $\hat M$, which coincides with the metric $\|\cdot\|$ coming from $M$ outside the $\delta$-neighborhood of the exceptional set, such that the cone fields $\CC^u_\omega$ and $\CC^{cs}_\omega$ defined above are eventually (forward and backward) invariant under $D\phi^t_\rho$ and verify the following hyperbolic properties:
\begin{enumerate}
\item for all finite orbits segments $\{\phi^s_\rho(x,y), 0\le s\le t\}$, which are entirely contained in the $\delta$-neighborhood of the exceptional set and 
for all $ v\in \CC^u_\omega(x,y)$
$$
 \|D\phi^t_\rho(v)\|_\delta>\mu^t\|v\|_\delta,\,\, t\ge 0
$$
\item for
all finite orbits segments $\{\phi^s_\rho(x,y), 0\le s\le t\}$, which are entirely contained in the $\delta$-neighborhood of the exceptional set, for all  $v\in \CC^u_\omega(x,y)$  and for all $w\in \CC^{cs}_\omega(x,y)$ with $D\phi^tw\in \CC^{cs}_\omega(\phi^t(x,y))$
$$
\frac{\|D\phi^t_\rho(v)\|_\delta}{\|v\|_\delta}>c\kappa^t\frac{\|D\phi^t_\rho(w)\|_\delta}{\|w\|_\delta}, \,\, t\ge 0
$$
\end{enumerate}
\end{lemma}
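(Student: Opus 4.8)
The plan is to show that \emph{inside} the $\delta$-neighborhood there is essentially nothing new to prove: everything reduces to~\cite{G} via the slow-down. By the first property of $\bar\rho$ we have $\rho\equiv\rho_0$ on $\tilde\D^k_{<\delta}\times N$, so there (along orbit segments that stay in the neighborhood) the slowed flow is the direct product
$$
\phi^t_\rho=\hat a^{\rho_0 t}\times\phi^t_N,
$$
where $\hat a^{\rho_0 t}$ is the blow-up (at the origin of each fibre) of the time-rescaled linear saddle $a^{\rho_0 t}$ on $\tilde\D^k$, and $\phi^t_N$ is the Anosov flow on $N$. The $D\phi^t_\rho$-invariant splitting of this product flow is exactly $E^s_N\oplus(E^c_N\oplus H)\oplus E^u_N$ --- the one the cones~(\ref{eq_cones}) are centered on --- and inequality~(\ref{eq_domination}) for $\rho_0$ is precisely the domination hypothesis~\cite[(2.3)]{G} for $\phi^t_\rho$. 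Thus, on a \emph{fixed} neighborhood $\tilde\D^k_{<\delta_0}\times N$ of the exceptional set (with $\delta_0$ small enough for the product structure to hold), \cite{G} supplies an angle $\omega_0$, constants, an adapted metric $\|\cdot\|_0$ and all the required hyperbolic estimates for $\phi^t_\rho$ relative to this splitting: $E^u_N$ is expanded by more than $\mu^t$ and $E^s_N$ contracted by less than $\lambda^t$, while the rates along $E^c_N$ and along $H$ --- whose extreme values are $(\lambda'/\mu')^{\pm\rho_0}$ --- are confined to $(\lambda,\mu)$ with uniform slack by~(\ref{eq_domination}).

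The content of~\cite{G} one invokes is the standard product computation, which I recall for orientation: writing $D\phi^t_\rho=D\hat a^{\rho_0 t}\oplus D\phi^t_N$ and decomposing a vector along $E^s_N\oplus E^c_N\oplus H\oplus E^u_N$, in the $u$-cone the $E^u_N$-component is expanded by at least $\mu_N^t$ for some $\mu_N>\mu$, whereas the $E^s_N$-component contracts, the $E^c_N$-component is sub-exponential, and the $H$-component grows by at most $(\mu'/\lambda')^{\rho_0 t}<\mu^t$; so $D\phi^t_\rho$ pushes vectors of $\CC^u_\omega$ into strictly narrower $u$-cones, which is eventual forward invariance, and the time-reversed estimate --- where $E^u_N$ is now the fastest direction, hence the worst one and maximally contracted --- gives eventual backward invariance of $\CC^{cs}_\omega$. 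For the domination estimate~(2), along $\CC^u_\omega$ the norm grows by at least $\mu_N^t$, while along a vector $w$ constrained to remain in $\CC^{cs}_\omega$ the angle condition keeps its $E^u_N$-component negligible, so its norm grows by at most $C e^{\epsilon t}\max\{1,(\mu'/\lambda')^{\rho_0 t}\}$; the strict gap $\mu_N>\mu>\max\{1,(\mu'/\lambda')^{\rho_0}\}$ furnished by~(\ref{eq_domination}) leaves room for $\epsilon,\omega$ small and $\kappa>1$ independent of $\delta$, with $c$ absorbing bounded constants.

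It remains to pass from the fixed model to the shrinking $\delta$-neighborhood and to produce the metric $\|\cdot\|_\delta$. Set $\|\cdot\|_\delta$ equal to $\|\cdot\|_0$ for $\|x\|\le\delta/2$, interpolate radially to $\|\cdot\|$ on $\delta/2\le\|x\|\le\delta$, and keep $\|\cdot\|_\delta=\|\cdot\|$ for $\|x\|\ge\delta$; this is a global metric on $\hat M$ coinciding with $\|\cdot\|$ off the $\delta$-neighborhood, and it is comparable to $\|\cdot\|_0$ on the whole $\delta$-neighborhood with a constant that depends only on how $\|\cdot\|_0$ compares to $\|\cdot\|$ on the fixed set $\tilde\D^k_{<\delta_0}\times N$, hence not on $\delta$. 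So the exponential estimates of the previous paragraph persist in $\|\cdot\|_\delta$ with the same rates and an enlarged but $\delta$-independent multiplicative constant, and, absorbing the bounded angle-distortion between the two metrics by a further shrink of $\omega$, one obtains~(2) uniformly in $\delta$. For the clean inequality~(1) one uses that the slow-down leaves the $E^u_N$-direction untouched, so $D\phi^t_\rho$ expands $E^u_N$ exactly as $\phi^t_N$ does: take $\|\cdot\|_0$ to be a genuine Lyapunov metric whose infinitesimal expansion along the $u$-cone exceeds $\log\mu$ (after the standard harmless enlarging of $\lambda$ and shrinking of $\mu$ inside $(0,1)$ and $(1,\infty)$, if needed), note that $\phi^t$ itself already expands $E^u_N$ by more than $\mu^t$ in $\|\cdot\|$ so the property survives across the collar, and shrink $\omega$; this gives $\|D\phi^t_\rho v\|_\delta>\mu^t\|v\|_\delta$ for $v\in\CC^u_\omega$ along every segment in the $\delta$-neighborhood.

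The main obstacle is uniformity in $\delta$: one must be sure that $\omega,c,\kappa,\delta_0$ do not degenerate as the neighborhood shrinks. The argument above isolates this into the single observation that~\cite{G}'s data live on a \emph{fixed} neighborhood of the exceptional set (the product-structure region), so that both the restriction to $\tilde\D^k_{<\delta}\times N$ and the collar interpolation contribute only $\delta$-independent distortion --- together with the minor delicacy of insisting on the bare constant in~(1) rather than some $c>0$, handled by the Lyapunov-metric choice above, which is available precisely because $E^u_N$ is not slowed down. Since every ingredient is symmetric under $t\mapsto -t$, the time-reversed argument gives the companion statement with the roles of $u$ and $cs$ exchanged, as indicated in the outline.
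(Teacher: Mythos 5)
Your overall reduction matches the paper's approach: both proofs observe that on the $\delta$-neighborhood $\rho\equiv\rho_0$, so $\phi^t_\rho$ is a product of the slowed saddle and $\phi^t_N$, the domination condition~(\ref{eq_domination}) is exactly the hypothesis of~\cite{G}, and the estimates are then imported wholesale from the prequel (Section~5.1 for the metric, Lemma~5.1 for the splitting, Section~5.3.4 for the cone estimates). That much is the same argument, just stated versus cited.

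Where you go out on your own is the construction of $\|\cdot\|_\delta$ by radial interpolation between $\|\cdot\|_0$ and $\|\cdot\|$ over the collar $\delta/2\le\|x\|\le\delta$, and this is where there is a gap for estimate~(1). Interpolating between two Lyapunov metrics does not in general yield a Lyapunov metric. Writing $\|\cdot\|_\delta^2=(1-\chi)\|\cdot\|_0^2+\chi\|\cdot\|^2$ and differentiating $\log\|D\phi^t_\rho v\|_\delta$ along the flow, one picks up an extra term proportional to $(\nabla\chi\cdot\dot\phi)\bigl(\|\cdot\|^2-\|\cdot\|_0^2\bigr)\big/\|\cdot\|_\delta^2$. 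Since the collar width is $\delta/2$ and the slowed saddle moves at speed comparable to $\rho_0\delta$ there, $|\nabla\chi\cdot\dot\phi|$ is of order $\rho_0$ --- uniform in $\delta$, as you say, but not small, and in particular not dominated by the fixed margin between the infinitesimal expansion of $E^u_N$ and $\log\mu$ unless one either takes $\rho_0$ small (a constraint not captured by~(\ref{eq_domination}) alone) or arranges that $\|\cdot\|_0$ and $\|\cdot\|$ actually coincide on the collar. Your appeal that \emph{both} endpoints of the interpolation have the right Lyapunov property does not address the $\nabla\chi$ term, so as written the argument only delivers $\|D\phi^t_\rho v\|_\delta\ge C\mu^t\|v\|_\delta$ with some $C<1$, not the clean $\mu^t$ that the statement asserts. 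The paper avoids this by pointing to the $\delta$-dependent metric construction of~\cite[Section~5.1]{G}, which is built directly (and, implicitly, matches the ambient product metric outside a region where the exceptional-set modification is localized) rather than by a two-metric blend over a shrinking annulus; to make your version airtight you would need either to verify that $\|\cdot\|_0$ can be taken to agree with $\|\cdot\|$ on $\{\delta/2\le\|x\|\le\delta\}$ for all small $\delta$, or to track the collar error quantitatively against the margin.
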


The proof of this lemma is the basic technical ingredient of the prequel paper~\cite{G}. More precisely, the construction of appropriate Riemannian metric $\|\cdot\|_\delta$ is given in Section~5.1 of~\cite{G}. (For this construction we need to assume that the Riemannian metric $\|\cdot\|$ from the definition of partial hyperbolicity~(\ref{def_ph_flow}) on $\D^k\times N$ is a direct sum of the canonical flat metric and a metric on $N$. It was explained in Section 5.3.2 of~\cite{G} that such assumption can be made without loss of generality.) Then Lemma~5.1 of~\cite{G} gives partial hyperbolicity of the splitting $E^s_N\oplus (E^c_N\oplus H)\oplus E^u_N$. Finally, the fact that the estimates hold for the vectors in the cones (with proper choice of $\omega$) is proved in Section~5.3.4 of~\cite{G}.
 
\subsection{Control along the center in the transition domain}

Consider the transition domain $A_\delta\times N$, where $A_\delta=\tilde \D^k_{<2\delta}\cap \tilde \D^k_{>\delta}$. Recall that the Riemannian metric $\|\cdot\|_\delta$ restricted to this domain is the direct sum of the flat metric $\|\cdot\|$ and a metric on $N$. Also recall that the flow $\phi^t_\rho$ is generated by $\rho(x) X(x)+Y(y)$, $(x,y)\in A_\delta\times N$. It follows that, even though $\rho$ is not constant, the splitting $E^s_N\oplus E^c_N\oplus E^u_N \oplus H$ stays invariant within this domain. Note that because of the nature of the dynamics of the hyperbolic saddle (invariance under rescaling) and because $\rho\ge\rho_0$ with $\rho_0$ independent of $\delta$, there 
exists a uniform upper bound on time $T$ which an orbit can spend in $A_\delta\times N$ of the form
\begin{equation}
\label{eq_time}
T\le {C_1}{\delta},
\end{equation}
where $C_1$ is a constant which depends on $a^t$ and $\rho_0$, but does not depend on $\delta$ and $\rho$.

 We proceed to explain how to control extra distortion which occurs along the ``horizontal" distribution $H$. Hence we focus on the dynamics of reparametrized saddle flow $a^t_\rho$ generated by $\rho X$. 
The extra distortion which occurs along $H$ is due to $\rho$-driven shear, hence gradient of $\rho$ will appear.
We will perform all calculations in the canonical Euclidean coordinates on $A_\delta$.
Let $v\in T_xA_\delta$, let $v^t=Da^t_\rho v$ and let $v_0^t$ stand for the (isometric) translate of $v^t$ such that $v$ and $v_0^t$ have the same foot-point. Then
$$
v_0^t=v+t D(\rho X)v+h.o.t.
$$
Differentiating with respect to $t$ yields
$$
\frac{d\|v_0^t\|}{dt}\Big|_{t=0}=\frac{\langle D(\rho X)v,v\rangle}{\|v\|}
$$
Further
$$
\langle D(\rho X)v,v\rangle =\langle (\nabla\rho X)v,v\rangle+\rho\langle DXv,v\rangle\le (C_2\|\nabla\rho\|+C_3)\|v\|^2,
$$
 where $C_2$ and $C_3 (=\log\mu')$ are constants which depend only on $a^t$ (recall that $\rho\le 1$). We conclude that there exists a constant $C_4$ and a small $t_0$ such that for all $t<t_0$ and for all $v\in T_xA_\delta$, $x\in A_\delta$, we have
 $$
 \frac{\|Da_\rho^tv\|}{\|v\|}\le 1+C_4t\big(\max_{A_\delta} \|\nabla\rho\| +1\big)
 $$
Now, using this inequality we obtain the following lemma.
\begin{lemma} 
\label{lemma_transition}
Assume that an orbit segment $\{a_\rho^s(x), \, 0\le s\le T\}$ is entirely contained in $A_\delta$, then for all $v\in T_xA_\delta$, $x\in A_\delta$
$$
 \frac{\|Da_\rho^Tv\|}{\|v\|}\le C_5,\,\,\,\mbox{and}\,\,\,\, \frac{\|Da_\rho^Tv\|}{\|v\|}\ge \frac{1}{C_5},
$$
where $C_5$ is a constant which does not depend on $\rho$ and $\delta$ as long as $\delta\le 1$.
\end{lemma}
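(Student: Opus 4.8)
The plan is to integrate the infinitesimal distortion bound displayed just above the lemma along the orbit segment, using the exact cancellation between the size of $\nabla\rho$ in the transition region and the time an orbit spends there. First I would record the only quantitative fact about $\rho$ that enters: since $\rho(x)=\bar\rho(\|x\|)$ with $|\bar\rho'(s)|<1/\delta$ on $(\delta,2\delta)$ and $\bar\rho'\equiv0$ off this interval, we have $\nabla\rho(x)=\bar\rho'(\|x\|)\,x/\|x\|$, hence $\max_{A_\delta}\|\nabla\rho\|\le1/\delta$. Coupling this with the transit-time estimate~(\ref{eq_time}), $T\le C_1\delta$, gives the key product bound
$$
\bigl(\max_{A_\delta}\|\nabla\rho\|+1\bigr)\,T\le\Bigl(\tfrac1\delta+1\Bigr)C_1\delta=C_1(1+\delta)\le 2C_1\qquad(\delta\le1),
$$
in which $\delta$ has disappeared.

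For the upper inequality, partition $[0,T]$ into subintervals $0=s_0<s_1<\dots<s_m=T$ of length $<t_0$ (no partition is needed if $T<t_0$, and $m$ is finite since $T\le C_1$). As the whole segment lies in $A_\delta$, each $a_\rho^{s_i}(x)$ lies in $A_\delta$, so the displayed inequality preceding the lemma applies at every step of the composition $Da_\rho^T=Da_\rho^{s_m-s_{m-1}}\!\circ\cdots\circ Da_\rho^{s_1-s_0}$, and, using $1+u\le e^u$,
$$
\frac{\|Da_\rho^Tv\|}{\|v\|}\le\prod_{i=0}^{m-1}\Bigl(1+C_4(s_{i+1}-s_i)\bigl(\max_{A_\delta}\|\nabla\rho\|+1\bigr)\Bigr)\le\exp\!\Bigl(C_4\bigl(\max_{A_\delta}\|\nabla\rho\|+1\bigr)T\Bigr)\le e^{2C_1C_4}.
$$
Thus the upper bound holds with $C_5:=e^{2C_1C_4}$, which depends only on $a^t$ and $\rho_0$ (through $C_4$ and $C_1$), not on $\delta$ or on the particular bump function $\rho$.

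For the lower bound I would avoid repeating the computation and instead use time-reversal: the flow $a_\rho^{-t}$ is generated by $\rho\cdot(-X)$, where $-X$ is again a linear hyperbolic saddle, so the derivation of the displayed inequality — and hence the upper bound just proved — applies verbatim to $a_\rho^{-t}$ with constants of the same type. Applying it to the orbit segment $\{a_\rho^{-s}(a_\rho^Tx):0\le s\le T\}=\{a_\rho^s(x):0\le s\le T\}\subset A_\delta$ and to the vector $Da_\rho^Tv\in T_{a_\rho^Tx}A_\delta$ gives $\|v\|/\|Da_\rho^Tv\|=\|Da_\rho^{-T}(Da_\rho^Tv)\|/\|Da_\rho^Tv\|\le C_5$, i.e. $\|Da_\rho^Tv\|/\|v\|\ge1/C_5$ (enlarging $C_5$ if the two constants differ). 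The estimate is short, and the one thing that really has to be right is the cancellation of the $\delta$'s above: the shear produced by $\rho$ across the transition annulus is of size $\sim1/\delta$, which is large, but an orbit spends only time $O(\delta)$ there because $X$ is linear and $\rho\ge\rho_0$ uniformly in $\delta$; this is precisely property~(2) of $\bar\rho$ matched against~(\ref{eq_time}), and it is what lets the slow-down-then-blow-up work without a domination hypothesis. I do not expect a genuine obstacle here — the content lies in having set up $\bar\rho$ and the transit-time bound correctly beforehand.
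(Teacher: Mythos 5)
Your proof is correct and follows essentially the same route as the paper's: telescope $Da_\rho^T$ into short-time steps in $A_\delta$, bound each factor by the displayed infinitesimal estimate, and use the cancellation between $\max\|\nabla\rho\|\le1/\delta$ and $T\le C_1\delta$ to kill the $\delta$-dependence, with the lower bound obtained by applying the same argument to the time-reversed flow. The only cosmetic difference is that you use $1+u\le e^u$ on a finite partition while the paper lets the partition size $m\to\infty$ to produce the exponential; both yield the same $C_5=e^{O(C_1C_4)}$.
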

 \begin{proof}
 Pick a large $m$ such that $T/m<t_0$, then
 \begin{multline*}
  \frac{\|Da_\rho^Tv\|}{\|v\|}=\prod_{i=0}^{m-1}  \frac{\|Da_\rho^{(i+1)T/m}v\|}{\|Da_\rho^{iT/m}v\|}\le \\
  \left(1+C_4\frac Tm\big(\max_{A_\delta} \|\nabla\rho\| +1\big)\right)^m\to \exp(C_4T\big(\max_{A_\delta} \|\nabla\rho\| +1\big)), m\to\infty
 \end{multline*}
Now recall that, by the second condition on the bump function $\rho$ we have $\nabla\rho(x)=\bar\rho'(\|x\|)<1/\delta$. Using this and the upper bound on $T$ given by~(\ref{eq_time}) we obtain
$$
 \frac{\|Da_\rho^Tv\|}{\|v\|}\le\exp\left(C_4{C_1}{\delta}\left(\frac 1\delta+1\right)\right)\le C_5
$$
The second inequality of the lemma is derived by using the same argument applied to the inverse flow $a^{-t}_\rho$.
 \end{proof}
 
\subsection{Cones away from the exceptional set}
 
 To define the cones on $M\backslash(\tilde\D^k_{>2\delta}\times N)$ we use the same $\omega$ given by Lemma~\ref{lemma1} and let
 \begin{equation*}
\begin{split}
\CC^u_\omega(p)=\{v\in T_{p}(M\backslash(\tilde\D^k_{>2\delta}\times N)): \measuredangle(v,\hat E^u)<\omega\}\\
\CC^{cs}_\omega(p)=\{v\in T_{p}(M\backslash(\tilde\D^k_{>2\delta}\times N)): \measuredangle(v,\hat E^c\oplus \hat E^s)<\omega\}
\end{split}
\end{equation*}
Because $\phi^t_\rho=\phi^t$  and  $\|\cdot\|_\delta=\|\cdot\|$ on $M\backslash(\tilde\D^k_{>2\delta}\times N)$ we then have invariance and hyperbolicity properties of these cones for orbit segments which stay in $M\backslash(\tilde\D^k_{>2\delta}\times N)$ by partial hyperbolicity of the flow $\phi^t$.

\subsection{Proof of partial hyperbolicity}
To obtain partially hyperbolic splitting $E^u_\rho\oplus E^{cs}_\rho$ for $\phi^t_\rho$ we use the cone criterion applied to $\CC^u_\omega$ and $\CC^{cs}_\omega$. We recall that on $\tilde\D^k_{<\delta}\times N$ the cone families are centered at $E^u_N$ and $E^s_N\oplus E^c_N\oplus H$ while on $M\backslash (\tilde\D^k_{>2\delta}\times N)$ the cone families are centered at $\hat E^u$ and $\hat E^c\oplus \hat E^s$.
Note also that our cone families are not defined in the transition domain $A_\delta\times N$. However, we don't need to extend cones there because orbits spend a uniformly bounded time in $A_\delta\times N$.

By preceding discussion the cones are eventually invariant and and possess hyperbolic properties required by the Cone Criterion as long as the orbit stays disjoint with $A_\delta\times N$. Hence we are left to analyze the case when $\phi^s(p)\in A_\delta\times N$, $0<s<T$, with $p$ and $\phi^T(p)$ in the boundary of $A_\delta\times N$. For the sake of concreteness we can focus on the case when $p\in \partial(\tilde\D^k_{<\delta}\times N)$ and $\phi^T(p)\in \partial(\tilde\D^k_{>2\delta}\times N)$. (The other two cases $p\in \partial(\tilde\D^k_{>2\delta}\times N)$, $\phi^T(p)\in \partial(\tilde\D^k_{<\delta}\times N)$ and $p\in \partial(\tilde\D^k_{>2\delta}\times N)$, $\phi^T(p)\in \partial(\tilde\D^k_{>2\delta}\times N)$ can be treated completely analogously.)  Recall that cone aperture $\omega$ is a fixed number given by Lemma~\ref{lemma1} and is independent of $\delta$. Also recall that $\hat E^s$, $\hat E^c$ and $\hat E^u$ are continuous distributions\footnote{Here we rely on the smoothness assumption for the partially hyperbolic splitting of $\phi^t$ in an essential way.} which coincide with $E^s_N$, $E^c_N\oplus H$ and $E^u_N$, respectively, on the exceptional set. Hence for all sufficiently small $\delta$ we have
$$
dist(E^s_N\oplus E^c_N\oplus H(q), \hat E^s\oplus \hat E^c(q))<\frac\omega{10}
$$
and
$$
dist(E^u_N(q), \hat E^u(q))<\frac\omega{10}
$$
for all $q\in \tilde \D^k_{<3\delta}\times N$. Because, locally in the neighborhood of the exceptional set, the flow $\phi^t_\rho$ preserves both splittings $E^u_N\oplus(E^c_N\oplus H)\oplus E^s_N$ and $\hat E^s\oplus\hat E^c\oplus\hat E^u$ it follows that
\begin{equation*}
\begin{split}
D\phi_\rho^T(E^u_N(p))\subset \CC^u_\omega(\phi^T_\rho(p)),\\
D\phi^{-T}_\rho(\hat E^c\oplus \hat E^s(\phi^T_\rho(p)))\subset \CC^{cs}_\omega(p)
\end{split}
\end{equation*}
Combining this observation with control provided by Lemma~\ref{lemma_transition} one can easily verify the following statement.
\begin{lemma}
\label{lemma3}
There exist constants $C_6>0$ and $C_7>0$ such that for all sufficiently small $\delta>0$ and for all points $\{p, \phi^T(p)\}\subset\partial(A_\delta\times N)$ we have
\begin{equation*}
\begin{split}
D\phi_\rho^T(\CC_\omega^u(p))\subset \CC^u_{C_6\omega}(\phi^T_\rho(p)),\\
D\phi^{-T}_\rho(\CC^{cs}_\omega (\phi^T_\rho(p)))\subset \CC^{cs}_{C_6\omega}(p),\\
\|D\phi^T_\rho v\|_\delta\ge C_7\|v\|_\delta, \, v\in \CC^u_\omega(p),\\
\|D\phi^{-T}_\rho v\|_\delta\ge C_7\|v\|_\delta,\, v\in \CC^{cs}_\omega(\phi^T_\rho(p))
\end{split}
\end{equation*}
\end{lemma}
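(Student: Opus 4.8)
\emph{Proof plan.} The plan is to run everything through the product structure that survives in the transition region $A_\delta\times N$. Recall three facts already at our disposal: (i) throughout $A_\delta\times N$ the splitting $H\oplus E^s_N\oplus E^c_N\oplus E^u_N$ is $D\phi^t_\rho$-invariant and the metric $\|\cdot\|_\delta$ is the orthogonal sum of the flat metric on $A_\delta$ and a fixed metric on $N$; (ii) the transit time $T$ through $A_\delta\times N$ is bounded by $(\ref{eq_time})$, so $T\le C_1\delta\le C_1$ once $\delta\le1$; (iii) on $\tilde\D^k_{<3\delta}\times N$ the distribution $E^u_N$ is $\tfrac{\omega}{10}$-close to $\hat E^u$ and $E^s_N\oplus E^c_N\oplus H$ is $\tfrac{\omega}{10}$-close to $\hat E^c\oplus\hat E^s$. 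It is enough to treat the case $p\in\partial(\tilde\D^k_{<\delta}\times N)$, $\phi^T_\rho(p)\in\partial(\tilde\D^k_{>2\delta}\times N)$; the remaining two cases are word for word the same, the only change being which of the two equivalent cone centers ($E_N$-type versus $\hat E$-type) sits at each endpoint and, accordingly, at which endpoint(s) the $\tfrac{\omega}{10}$ estimates of (iii) get used.

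First I would establish uniform two-sided distortion of $D\phi^{\pm T}_\rho$ on $A_\delta\times N$. On the $H$-factor this is exactly Lemma~\ref{lemma_transition}: $\|Da^{\pm T}_\rho w\|_\delta$ is comparable to $\|w\|_\delta$ with a constant independent of $\delta$ and $\rho$. On the $TN$-factor, $D\phi^t_\rho$ restricts to $D\phi^t_N$, so over time $T\le C_1$ a Gronwall estimate bounds the norm and conorm between $e^{-LC_1}$ and $e^{LC_1}$, with $L$ an upper bound for $\|DY\|$. Since all the distributions involved are continuous over the compact set $\overline{A_\delta}\times N$ (uniformly in $\delta\le1$), hence uniformly transverse, the $\|\cdot\|_\delta$-norm of a vector is comparable, up to a $\delta$-independent factor, to the largest of the norms of its components in any of the splittings in play. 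Combining these gives a $\delta$-independent $C\ge1$ with $C^{-1}\|v\|_\delta\le\|D\phi^{\pm T}_\rho v\|_\delta\le C\|v\|_\delta$ for every $v\in T(A_\delta\times N)$; this already delivers the last two inequalities of the lemma, with $C_7=C^{-1}$, and in fact for all vectors rather than just those in the cones.

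For the cone inclusions I would decompose along the invariant product splitting. Given $v\in\CC^u_\omega(p)$ (cone centered at $E^u_N(p)$), write $v=v^u+w$ with $v^u\in E^u_N(p)$ and $w$ in the complementary $E^s_N\oplus E^c_N\oplus H$; uniform transversality turns the angle bound $\measuredangle(v,E^u_N)<\omega$ into $\|w\|_\delta\le K\omega\|v^u\|_\delta$ for a fixed $K$. Applying $D\phi^T_\rho$ preserves this decomposition, and the step above gives $\|D\phi^T_\rho w\|_\delta\le CK\omega\|v^u\|_\delta$ while $\|D\phi^T_\rho v^u\|_\delta\ge C^{-1}\|v^u\|_\delta$, so $D\phi^T_\rho v$ makes angle $O(C^2K\omega)$ with $E^u_N(\phi^T_\rho(p))$; feeding in the $\tfrac{\omega}{10}$-closeness of $E^u_N$ to $\hat E^u$ at $\phi^T_\rho(p)\in\tilde\D^k_{<3\delta}\times N$ upgrades this to angle $\le C_6\omega$ with $\hat E^u(\phi^T_\rho(p))$, which is precisely membership in $\CC^u_{C_6\omega}(\phi^T_\rho(p))$. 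The center-stable inclusion is the time-reversed mirror of this: decompose $v\in\CC^{cs}_\omega(\phi^T_\rho(p))$ (cone now centered at $\hat E^c\oplus\hat E^s$, which by (iii) is $\tfrac{\omega}{10}$-close to the invariant $E^s_N\oplus E^c_N\oplus H$), apply $D\phi^{-T}_\rho$, and read off the angle with $E^s_N\oplus E^c_N\oplus H(p)$, the cone center at the inner endpoint $p$. Choosing $\omega$ small at the outset (Lemma~\ref{lemma1} is unaffected by shrinking $\omega$) and $\delta$ small enough for (iii) and for Lemma~\ref{lemma1} finishes the proof, with $C_6$ the maximum of the apertures produced above and $C_7=C^{-1}$.

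The only real work here is bookkeeping: tracking which of the two cone centers is in force at $p$ and at $\phi^T_\rho(p)$ in each of the three boundary cases, and verifying that $C_6$ and $C_7$ genuinely do not depend on $\delta$ or $\rho$. That $\delta$-independence rests entirely on the bound $(\ref{eq_time})$ for the transit time and on the $\delta$- and $\rho$-independence of the constant in Lemma~\ref{lemma_transition}, both of which are already in place; there is no further analytic difficulty, the substantive estimates having been absorbed into Lemmas~\ref{lemma1} and~\ref{lemma_transition}.
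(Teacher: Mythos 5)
Your proof is correct and fleshes out exactly the route the paper compresses into ``combining this observation with control provided by Lemma~\ref{lemma_transition} one can easily verify'': push all vectors through the $D\phi^t_\rho$-invariant product splitting $H\oplus E^s_N\oplus E^c_N\oplus E^u_N$ on $A_\delta\times N$, obtain a $\delta$-independent two-sided distortion bound from Lemma~\ref{lemma_transition} on the $H$-factor and from the transit-time bound~(\ref{eq_time}) on the $TN$-factor, and then translate between the product-splitting cone centers and the $\hat E$ cone centers via the $\omega/10$-closeness on $\tilde\D^k_{<3\delta}\times N$. One minor variation worth pointing out: the paper's preamble to the lemma also asserts that $\phi^t_\rho$ preserves $\hat E^s\oplus\hat E^c\oplus\hat E^u$ near the exceptional set, but your argument never invokes that claim---you route the center-stable inclusion entirely through the manifestly invariant product splitting together with the $\omega/10$ comparison at the endpoints, which is the safer bookkeeping since $\phi^t_\rho$ differs from $\phi^t$ on the transition region $A_\delta\times N$ and so does not automatically inherit invariance of the original splitting there.
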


Now note that by decreasing $\delta$ we can increase the return time to the $2\delta$-neighborhood of the exceptional set, $\tilde \D^k_{<2\delta}\times N$, as much as we wish. This observation combined with Lemma~\ref{lemma3} implies that $\CC^u_\omega$ is eventually forward invariant and $\CC^{cs}_\omega$ is  eventually backward invariant for all sufficiently small $\delta$. Finally the exponential expansion of vectors in $\CC^u_\omega$ and domination of $\CC^u_\omega$ over $\CC^{cs}_\omega$ can be checked by using a standard argument: subdividing the orbit into segments and  pasting together the estimates given by Lemmas~\ref{lemma1}, \ref{lemma3} as well as hyperbolicity of cone families outside $\tilde \D^k_{<2\delta}\times N$. This arguments takes an advantage of the long return time to $\tilde \D^k_{<2\delta}\times N$ one more time. We suppress detailed estimates as they are very standard.

\section{Volume preserving modification via Katok-Lewis trick}

We first formulate a standard lemma.
\begin{lemma}
\label{lemma}
Let $M$ be a smooth manifold equipped with a smooth non-degenerate volume form $m$. Assume that a flow generated by a smooth vector field preserves $m$. Consider a smooth function $\rho\colon M\to \R$, $\rho>0$. Then the flow generated by $\rho X$ preserves $m/\rho$.
\end{lemma}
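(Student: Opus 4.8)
The statement is infinitesimal in nature, so the plan is to compute the Lie derivative of $m/\rho$ along the vector field $\rho X$ and show that it vanishes; standard ODE theory then gives that the (local) flow of $\rho X$ preserves $m/\rho$. I would first note that $m/\rho$ is again a smooth non-degenerate volume form, since $\rho>0$, so the conclusion is a genuine volume-preservation statement, and that the hypothesis that the flow of $X$ preserves $m$ is equivalent to $\mathcal{L}_X m=0$.

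Next I would record the two elementary facts that drive the computation: (i) $m$ is a top-degree form on $M$, hence $dm=0$ and moreover $d(fm)=0$ for every smooth function $f$ (the $(\dim M+1)$-form vanishes identically); and (ii) the interior product is $C^\infty(M)$-linear in its vector-field slot, so $\iota_{\rho X}\alpha=\rho\,\iota_X\alpha$ for any differential form $\alpha$. Combining (i) with Cartan's formula, the hypothesis $\mathcal{L}_X m=0$ becomes $d(\iota_X m)=\mathcal{L}_X m-\iota_X dm=0$.

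Then I would apply Cartan's magic formula to $\rho X$ and the form $m/\rho$:
\[
\mathcal{L}_{\rho X}\!\left(\frac{m}{\rho}\right)=\iota_{\rho X}\,d\!\left(\frac{m}{\rho}\right)+d\,\iota_{\rho X}\!\left(\frac{m}{\rho}\right).
\]
The first term vanishes by fact (i) applied to $f=1/\rho$. For the second term, fact (ii) and $C^\infty$-linearity in the form slot give $\iota_{\rho X}(m/\rho)=\rho\cdot\tfrac1\rho\,\iota_X m=\iota_X m$, whence $d\,\iota_{\rho X}(m/\rho)=d(\iota_X m)=0$ by the previous step. Therefore $\mathcal{L}_{\rho X}(m/\rho)=0$. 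Writing $\psi^t$ for the flow of $\rho X$, the identity $\frac{d}{dt}(\psi^t)^*(m/\rho)=(\psi^t)^*\mathcal{L}_{\rho X}(m/\rho)=0$ then shows $(\psi^t)^*(m/\rho)=m/\rho$ for all $t$ (all $t\in\R$ when $M$ is compact, which is the relevant case).

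There is essentially no obstacle here; the only thing to keep an eye on is the exact cancellation of $\rho$ against $1/\rho$ in the interior-product term, which is precisely why dividing $m$ by $\rho$ — rather than rescaling it some other way — is the correct compensation for reparametrizing $X$ by the factor $\rho$. For intuition one may alternatively observe that the orbits of $\rho X$ are those of $X$ with time changed according to $d\tau=\rho\,dt$, and $1/\rho$ is exactly the Jacobian correction for this reparametrization in the flow direction; but the Cartan-calculus argument above is the most economical way to make this rigorous.
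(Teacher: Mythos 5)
Your proof is correct and in fact more economical than the paper's. The paper first invokes the Leibniz-type identity $\mathcal{L}_{\rho X}\alpha=\rho\,\mathcal{L}_X\alpha+d\rho\wedge\iota_X\alpha$, then expands $d\bigl(\tfrac1\rho\iota_X m\bigr)$ by the product rule and watches two $d\rho\wedge\iota_X m$ terms cancel. You instead apply Cartan's formula once directly to the pair $(\rho X,\,m/\rho)$ and observe the immediate pointwise cancellation $\iota_{\rho X}(m/\rho)=\rho\cdot\tfrac1\rho\,\iota_X m=\iota_X m$, after which both terms of Cartan's formula vanish for structural reasons ($d(m/\rho)=0$ because it is top-degree, and $d\iota_X m=0$ by hypothesis). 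This skips the Leibniz identity for rescaled vector fields entirely and makes the role of the factor $1/\rho$ --- exactly compensating the factor $\rho$ via the $C^\infty$-bilinearity of $\iota$ --- completely transparent. Both proofs are correct and rely on the same two facts ($dm=0$ and Cartan's formula); yours just organizes the cancellation at the level of the interior product rather than at the level of the exterior derivative, which is shorter.
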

\begin{proof}  By Cartan's formula
$$
0=\LL_Xm=\iota_Xdm+d\iota_Xm=d\iota_Xm
$$
and similarly $\LL_X(m/\rho)=d\iota_X(m/\rho)$. We calculate
\begin{multline*}
\LL_{\rho X}(m/\rho)=\rho\LL_X(m/\rho)+d\rho\wedge\iota_X(m/\rho)=\rho d\iota_X(m/\rho)+\frac1\rho d\rho\wedge \iota_Xm=\\
\rho d(\frac1\rho\iota_Xm)+\frac1\rho d\rho\wedge \iota_Xm=
\rho\left(-\frac{1}{\rho^2} d\rho\wedge\iota_Xm+\frac1\rho d\iota_Xm \right)+\frac1\rho d\rho\wedge \iota_Xm=d\iota_Xm=0
\end{multline*}
\end{proof}

The goal of this section is to prove the Addendum~\ref{add}. Recall that we assume that $\phi^t\colon M\to M$ preserves a smooth volume $m$ and $m|_{\D^k\times N}
=vol\otimes vol_N$. Recall that $\phi^t_\rho$ is a slow down of $\phi^t$ along $\D^k$. By Lemma~\ref{lemma}, the flow  $\phi^t_\rho$ also locally preserves the smooth volume 
$m_\rho|_{\D^k\times N} =\frac1\rho vol\otimes vol_N$. Note that $m_\rho=m$ near the boundary and hence extend to a smooth $\phi^t_\rho$-invariant volume on the whole of $M$ which we still denote by $m_\rho$.
Because $\rho=\rho_0$ is a constant on $\D^k_{<\delta}$, we see that $m_\rho$ still have a product form $\frac{1}{\rho_0} vol\otimes vol_N$ on $\D^k_{<\delta}\times N$.

\subsection{Replacing the smooth structure}
If we equip $\D^k$ with the standard Euclidean coordinates $(x_1, x_2,\ldots , x_k)$ then
\begin{equation}
\label{eq_vol}
vol=dx_1\wedge dx_2\wedge \ldots \wedge dx_k.
\end{equation}
By commutativity of~(\ref{diag}) $\hat\phi^t_\rho$ preserves $\phi^*m_\rho$, which is a smooth measure away from the exceptional set.

Let's examine the form of $\phi^*m_\rho$ at the exceptional set. Because $\pi$ is a product, we only need to look at the pullback of $vol$ to $\tilde \D^k$ under $\tilde \D^k\to \D^k$. Recall that
$$
\tilde\D^k=\{(x_1,x_2,\ldots x_k, \ell): (x_1,x_2,\ldots x_k)\in \ell\}
$$
and that the standard smooth charts for $\tilde\D^k$ are given by extending the standard charts for the projective space $\R P^{k-1}$. Namely the $i$-th  chart is given by
\begin{multline}
\label{eq_chart}
\Psi_i(u_1, u_2, \ldots u_k)=\\
(u_1u_i, u_2u_i,\ldots u_{i-1}u_i, u_i, u_{i+1}u_i, \ldots u_ku_i, [u_1:\ldots :u_{i-1}: 1: u_{i+1}:\ldots :u_k])
\end{multline}
We can calculate the pull-back of $vol$
$$
d(u_1u_i)\wedge d(u_2u_i)\wedge\ldots\wedge du_i\wedge \ldots \wedge d(u_ku_i)=u_i^{k-1}du_1\wedge du_2\wedge\ldots\wedge du_k.
$$
Hence, when $k>1$ the pull-back vanishes on the projective space. To remedy the situation we follow the idea of Katok-Lewis (which they used to construct non-standard higher rank volume preserving group actions.) Namely we replace the smooth structure on $\D^k$ by declaring that 
$$
\Phi\colon \vec u\mapsto \|\vec u\|^\alpha \vec u, \,\alpha<0
$$
is a smooth chart near the origin (\ie by changing the smooth atlas). With respect to this chart the Euclidean norm of a vector $\vec u$ is given by
\begin{equation}
\label{eq_norm}
\|\vec u\|_\textup{new}=\|\vec u\|^{1+\alpha}
\end{equation}

Accordingly we change the smooth structure on $M$ by declaring that $\Phi\times id_N\colon\D^k\times N\to M$ is a smooth chart at $N$. Note that $M$ equipped with the new smooth atlas, which we denote by $M^{\textup{new}}$, is obviously diffeomorphic to the original $M$. However, it is easy to check that $a^t_\rho\colon\D^k\to \D^k$ and, hence, $\phi^t_\rho\colon M^{\textup{new}}\to M^{\textup{new}}$ fail to be smooth.

Accordingly we replace we replace charts~(\ref{eq_chart}) for $\tilde \D^k$ by composing $\Psi_i$ and $\Phi$, that is,
\begin{multline*}
\Psi_i^{\textup{new}}(u_1, u_2, \ldots u_k)=\\
\big(f_\alpha(u_1, \ldots, u_{i-1}, u_{i+1}, \ldots u_k)\|u_i\|^\alpha(u_1u_i, u_2u_i,\ldots u_i, \ldots u_ku_i), [u_1:\ldots :1:\ldots :u_k])\big),
\end{multline*}
where
$$
f_\alpha(u_1, \ldots, u_{i-1}, u_{i+1}, \ldots u_k)=(u_1^2+u_2^2+\ldots +u_{i-1}^2+1+u_{i+1}^2+\ldots +u_k^2)^{\alpha/2}
$$

Because the new smooth structure amounts to mere reparametrization in the radial direction the projective dynamics remains exactly the same. A direct calculation in chart shows that $\hat a_\rho^t\colon\tilde \D^k\to \tilde \D^k$ is smooth with respect to the new smooth structure. Hence $\hat\phi^t_\rho\colon \hat M^{\textup{new}}\to \hat M^{\textup{new}}$ is also smooth. Further, by appropriate choice of $\alpha$ we can now guarantee that $\pi^*m$ is a non-degenerate volume on $\hat M^{\textup{new}}$.
We present the chart calculation which determines the ``right" value of $\alpha$. In order to simplify notation we perform this calculation in the first chart $\Psi_1^{\textup{new}}$. We also abbreviate $f_\alpha(u_2, u_3, \ldots u_k)$ to simply $f_\alpha$. Note that
$$
df_\alpha\wedge du_2\wedge du_3\wedge\ldots \wedge du_k=0
$$
This is very helpful for the calculation:
\begin{align*}
 d   & (f_\alpha    \|u_1\|^\alpha u_1)\wedge d(f_\alpha\|u_1\|^\alpha u_1 u_2)\wedge\ldots \wedge d(f_\alpha\|u_1\|^\alpha u_1u_k)=\\
 &    d(f_\alpha\|u_1\|^\alpha u_1)\wedge (u_2 d(f_\alpha\|u_1\|^\alpha u_1) + f_\alpha\|u_1\|^\alpha u_1 du_2  ) \wedge \ldots \wedge (u_k d(f_\alpha\|u_1\|^\alpha u_1) + f_\alpha\|u_1\|^\alpha u_1 du_k  )=\\
&    (f_\alpha\|u_1\|^\alpha u_1)^{k-1}d(f_\alpha\|u_1\|^\alpha u_1)\wedge du_2\wedge\ldots \wedge du_k=\\
&   (f_\alpha\|u_1\|^\alpha u_1)^{k-1}\big(f_\alpha d(\|u_1\|^\alpha u_1)\wedge du_2\wedge\ldots \wedge du_k +\|u_1\|^\alpha u_1 df_\alpha\wedge du_2\wedge du_3\wedge\ldots \wedge du_k\big)=\\
&   (f_\alpha\|u_1\|^\alpha u_1)^{k-1}(\alpha+1)f_\alpha\|u_1\|^\alpha du_1\wedge du_2\wedge \ldots \wedge du_k)=(\alpha+1)f_\alpha^k\|u_1\|^{k\alpha} u_1^{k-1}
\end{align*}
Notice that $f_\alpha$ is a smooth function.
Hence the pull-back of $vol$ is a smooth and non-degenerate on $M^{\textup{new}}$ when $k\alpha+k-1=0$, \ie
$$
\alpha=-\frac{k-1}{k}
$$
\begin{remark}
\label{remark2}
It is crucial for this construction that the initial volume on $\D^k$ given by~(\ref{eq_vol}) has constant density. Indeed, if we allow for a non-trivial density $\beta(x_1,\ldots x_k)$ and begin with $\beta dx_1\wedge dx_2\wedge \ldots \wedge dx_k$ instead, then all computations go through in the same way. However the expression for the density after the blow-up in the chart $\Psi_i^{\textup{new}}$ will have an additional factor
$$
\beta(f_\alpha(u_1, \ldots, u_{i-1}, u_{i+1}, \ldots u_k)\|u_i\|^\alpha(u_1u_i, u_2u_i,\ldots u_i, \ldots u_ku_i))
$$
which is not $C^1$ at the exceptional set given by $u_i=0$ (unless the Taylor coefficients of $\beta$ up to order $k$ vanish). Hence we have a positive continuous density which is not $C^1$ on the exceptional set. This issue, in fact, gives us an additional difficulty to overcome in the proof of Corollary~\ref{cor_main2}.
\end{remark}

\subsection{Partial hyperbolicity in volume preserving setting} We now have a volume preserving flow $\hat\phi^t_\rho\colon M^{\textup{new}}\to M^{\textup{new}}$. Here we explain that this flow is also partially hyperbolic provided that constant $\rho_0$ (from the definition of $\rho$) is chosen to be sufficiently small. Namely, we amend the domination condition~(\ref{eq_domination}), as follows
\begin{equation}
\label{eq_domination2}
\left(\frac{\lambda'}{\mu'}\right)^{\rho_0}>\max(\lambda, \mu^{-1}),\,\,\, \lambda< (\lambda')^{{\rho_0}/{k}},\,\,\, (\mu')^{{\rho_0}/{k}}<\mu
\end{equation}
Clearly these inequalities are verified  for a sufficiently small $\rho_0$.

The proof of partial hyperbolicity is the same as the one given in Section~\ref{section2}. The only difference which requires some commentary is the Lemma~\ref{lemma1} for $\hat\phi^t_\rho\colon M^{\textup{new}}\to M^{\textup{new}}$ under the condition~(\ref{eq_domination2}). Recall that the proof of this lemma mostly rests on Lemma~5.1 of~\cite{G} and the proof of Lemma~5.1 is the only place which requires some adjustments. We indicate how~(\ref{eq_domination2}) must be used in the proof of Lemma~5.1. Recall that on the small neighborhood of the projective space the dynamics of $\hat a^t_\rho$ is given by
$$
\hat a^t_\rho(s,v)=(\hat{\hat a}^t_\rho(s),\bar a^t_s(v)),\,\,\, s\in\mathbb R P^{k-1}, v\in\mathbb R_+
$$
where $\hat{\hat a}^t_\rho\colon\R P^{k-1}\to\R P^{k-1}$ is the projectivization of $a^t_\rho$ (which coincides with the restriction of $\hat a^t_\rho$ to $\R P^{k-1}$) and $\bar a^t_s$ is the cocycle over $\hat{\hat a}^t_\rho$ given by the action of $a^t_\rho$ on lines (see the proof of Lemma~5.3 in~\cite{G}).\footnote{ One difference which appears is that even though, with respect to the new smooth chart $\Psi^\textup{new}$, $a^t_\rho$ still sends lines to line the cocycle $\bar a_s^t$ is no longer linear. This, however, does not present any additional difficulty.}

The estimate on $\hat{\hat a}^t_\rho$ (Claim~5.4 of~\cite{G}) remains exactly the same as the alternation of the smooth structure did not change the projective dynamics. The place where~(\ref{eq_domination2}) is needed is the inequality~(5.16) of~\cite{G} (estimate on the cocycle $\bar a_s^t$). Indeed, given a small $\vec u$, according to~(\ref{eq_norm}), we have the local estimate
$$
\|a^t_\rho(\vec u)\|_\textup{new}=\|a^t_\rho(\vec u)\|^{1+\alpha}\le(c(\mu'^\rho_0)^t\|\vec u\|)^{1+\alpha}=c^{1/k}(\mu')^{\rho_0t/k}\|\vec u\|_\textup{new}
$$
and similarly
$$
\|a^t_\rho(\vec u)\|_\textup{new}\ge c^{-1/k}(\lambda')^{\rho_0t/k}\|\vec u\|_\textup{new}
$$
This effects the last inequality in the proof of Lemma~5.3 of~\cite{G}. Namely, we obtain an exponential upper bound in 
$$
\max\left(\left(\frac{\lambda'}{\mu'}\right)^{\rho_0}, (\mu')^{\rho_0/k}\right)
$$
(and, analogously, a lower bound with $(\lambda')^{\rho_0/k}$)
Hence, in order for the rest of the proof to work we need to use~(\ref{eq_domination2}) instead of~(\ref{eq_domination}).


\section{The example}

In this section we introduce geodesic flows on complex hyperbolic manifolds in detail and then prove Corollaries~\ref{cor_main} and~\ref{cor_main2}.

\label{section_example}

\subsection{Complex hyperbolic manifolds}

First recall that 1-dimensional complex hyperbolic space can be identified with 2-dimensional real hyperbolic space with metric equal to one quarter of the standard Poincar\'e metric. The linear fractional transformations form the group of holomorphic isometries (to generate the full group of isometries one also needs the anti-holomorphic transformation) and can be identified with $PSU(1,1)=\pm Id\backslash SU(1,1)$. Because of the $\frac14$ multiple in the expression for the metric the curvature is $-4$ and the contraction and expansion rates of the geodesic flow on the complex hyperbolic space are twice bigger. It follows that the full stable and unstable horocycles of geodesic flows on higher dimensional complex hyperbolic manifolds contain one dimensional ``fast" horocycles which correspond to the complex lines in the tangent bundle. This yields a partially hyperbolic splitting which is different from the Anosov one and makes the geodesic flow on complex hyperbolic manifold suitable for the blow-up surgery.

We begin by summarizing some standard material on complex hyperbolic manifolds. We mostly follow the lucid exposition by D.B.A. Epstein~\cite{epstein}. Consider the following Hermitian quadratic forms on $\C^{n+1}$ of signature $(n,1)$.
\begin{equation*}
\begin{split}
& Q(x)=\sum_{i=1}^{n}z_i\bar z_i-z_{n+1}\bar z_{n+1}\\ 
& \hat Q(x)=\sum_{i=1}^{n-1}z_i\bar z_i+z_n\bar z_{n+1}+\bar z_n z_{n+1}
\end{split}
\end{equation*}
These forms have the following associated matrices
\begin{equation*}
\begin{split}
& J=diag(1,1,\ldots 1, -1)\\
& \hat J=\left(\begin{array}{cc}Id & 0\\ 0 & J_0\end{array}\right)
\end{split}
\end{equation*}
respectively. Here $J_0=
\left(
\begin{smallmatrix}
0 & 1\\
1 & 0
\end{smallmatrix}
\right)$.
Let $SU(n,1;Q)$ and $SU(n,1;\hat Q)$ be the groups of $(n+1)\times(n+1)$ complex matrices which have determinant 1 and preserve corresponding form. These groups are conjugate in $GL(n+1)$ by 
$$
T=\left(\begin{array}{cc}Id & 0\\ 0 & T_0\end{array}\right)
$$ where $T_0=
\frac{1}{\sqrt{2}}\left(
\begin{smallmatrix}
1 & 1\\
-1 & 1
\end{smallmatrix}
\right)$.

Recall that the complex hyperbolic $n$-space $\mathbb H^n_\C$ can be defined as
$$
\mathbb H^n_\C=\{ [x]\in \C P^n: Q(x)<0\}
$$ 
Clearly the action of $SU(n,1;Q)$ on $\C^{n+1}$ induces an action on $\mathbb H^n_\C$ and, in fact,
$SU(n,1)$ coincides with the group of biholomorphic isometries of $\mathbb H^n_\C$.
 If $\Gamma$ is a discrete cocompact subgroup of $SU(n,1)$ acting on the right then the orbit space
$$
M=\mathbb H^n_\C/\Gamma
$$
is a closed complex hyperbolic manifold. Moreover, every closed complex hyperbolic manifold arises in this way. 

\subsection{The geodesic flow as a homogenous flow}
We describe $M$ and its unit tangent bundle as homogeneous spaces. The group $SU(n,1;Q)$ acts transitively on $\mathbb H^n_\C$ and the stabilizer of $[(0,0,\ldots 0,1)]$ is 
$$
\left\{
\begin{pmatrix}
A & 0\\
0 & \overline{\det A}
\end{pmatrix}: \, A\bar A^t=Id
\right\}\simeq U(n).
$$
The stabilizer of a tangent vector is the group $W(n-1)$ given by\footnote{Notice that, by mapping to the $(n-1)\times(n-1)$ upper diagonal matrix $A$, the group $W(n-1)$ is a double cover of $U(n-1)$. It is curious to notice that, unlike in the real case, $W(n-1)$ is not isomorphic to $U(n-1)$. However using the fact that $\det\colon U(n)\to U(1)$ is a trivial principal fiber bundle one can check that $W(n-1)$ is diffeomorphic to $U(n-1)$.}
$$
W(n-1)=
\left\{
\begin{pmatrix}
A & 0 & 0\\
0 & \bar \lambda & 0\\
0 & 0 & \bar\lambda
\end{pmatrix}: \, A\bar A^t=Id, \lambda^2={\det A}
\right\}
$$
Hence we have
$$
M=U(n)\backslash SU(n,1; Q)/\Gamma\;\;\;\;\;
T^1M=W(n-1)\backslash SU(n,1; Q)/\Gamma.
$$ 

The same descriptions work using $SU(n,1;\hat Q)$ as the underlying Lie group with embeddings of $W(n-1)$ and $U(n)$ are conjugated by $T$. 
Also note that $W(0)=\{\pm Id\}$ and we will write $PSU(1,1)$ instead of $W(0)\backslash SU(1,1)$.

From now on it would be more convenient to only use the form $\hat Q$ and we abbreviate $SU(n,1;\hat Q)$ to $SU(n,1)$.

Now recall the Lie algebras
$$\u(n-1)=\{A\in M_{n-1}: \bar A^\intercal=-A\}$$
and
\begin{equation}
\label{eq_su}
\su(n,1)=\su(n,1,\hat Q)=\{B\in M_{n+1}: Tr(B)=0,\;\; \bar B^\intercal \hat J+\hat JB=0 \}
\end{equation}
If we write a traceless matrix $B\in\su(n,1)$ in block form, then $B\in\su(n,1)$ if and only if $$B=\left(\begin{array}{cc}A & v\\ -J_0\bar v^\intercal & D\end{array}\right)$$ where $A\in\o(n-1)$ and $D=
\left(
\begin{smallmatrix}
a & ib \\
ic & -\bar a
\end{smallmatrix}
\right)
$, $a\in\C$, $b, c \in \R$.

The geodesic flow $d_t\colon T^1M\to T^1M$ is given by $W(n-1)g\Gamma\mapsto d_tW(n-1)g\Gamma=W(n-1)d_tg\Gamma$, where  
$$
d_t=\left(\begin{array}{cc}Id& 0\\ 0 & d_t^0\end{array}\right),\;\;\mbox{with}\;\;\;\;d_t^0=\left(\begin{array}{cc}e^t& 0\\ 0 & e^{-t}\end{array}\right)
$$
The strong stable and strong unstable horocycle subgroups are
$$
h^{s/u}_t=\left(\begin{array}{cc}Id& 0\\ 0 & h_t^{s0/u0}\end{array}\right),\;\;\mbox{with}\;\;\;h_t^{s0}=\left(\begin{array}{cc}1& it\\ 0 & 1\end{array}\right),\;\;\;
h_t^{u0}=\left(\begin{array}{cc}1& 0\\ it & 1\end{array}\right).
$$
We refer to~\cite{FK} for a more detailed exposition on the geodesic flow as a homogeneous flow.

\subsection{Totally geodesic holomorphic curve.}

The complex hyperbolic space $\mathbb H^1_\C$ can be identified with $\{z_1=z_2=\ldots=z_{n-1}=0\}\cap \mathbb H^{n}_\C$.
The group of holomorphic isometries $SU(1,1)$ of $\mathbb H^1_\C$ embeds into $SU(n,1)$ as lower diagonal block. Let $\Gamma$ be a cocompact lattice in $SU(n,1)$ and let $\Gamma_1=SU(1,1)\cap\Gamma$. We assume that $\Gamma_1$ is a cocompact subgroup of $SU(1,1)$. Hence the embedding $\mathbb H^1_\C\subset \mathbb H^n_\C$ yields the embeddings
\begin{equation*}
\begin{split}
& N=U(1)\backslash SU(1,1)/\Gamma_1\subset U(n)\backslash SU(n,1)/\Gamma=M,\;\;\mbox{and}\\
& T^1N=PSU(1,1)/\Gamma_1\subset W(n-1)\backslash SU(n,1)/\Gamma=T^1M
\end{split}
\end{equation*}
where $N$ is a totally geodesic one dimensional complex curve.

\subsection{Parametrization of the neighborhood and the geodesic flow}

We introduce a parametrization of a neighborhood $\U$ of $PSU(1,1)$ in $W(n-1)\backslash SU(n,1)$. This parametrization will be constructed to be $\Gamma_1$ equivariant and, hence, will descend to a parametrization of a neighborhood of $T^1N$ in $T^1M$.

Pick a small $\epsilon_0>0$ and take the following as a transversal to the Lie algebras of $SU(1,1)$ and $W(n-1)$. Using the block from~(\ref{eq_su}) let
$$
\mathbb D_{\epsilon_0}=\left\{\left(\begin{array}{cc}0& v\\ -J_0\bar v^\intercal & 0\end{array}\right)\in \su(n,1),\;\;\; \mbox{where}\;\;
\|v\|<\epsilon_0\right\}
$$ 
This is a $(4n-4)$-dimensional transversal spanned by weak stable and unstable horocycles. Let $\Sigma=\Sigma_{\epsilon_0}=\exp (\mathbb D_{\epsilon_0})$.

Now we define a parametrization 
$
p\colon \Sigma\times PSU(1,1)  \to W(n-1)\backslash SU(n,1)
$
of a neighborhood $\U=\U_{\epsilon_0}$ of  $PSU(1,1)$ in $W(n-1)\backslash SU(n,1)$ as follows
\begin{equation}
\label{eq_p}
p(\sigma, u) = W(n-1)\sigma u.
\end{equation}
To verify that this is a well-defined parametrization for a sufficiently small $\epsilon_0$ it is sufficient to check that the map $P\colon W(n-1)\times\Sigma\times PSU(1,1)\to SU(n,1)$ given by $P(w,\sigma, u)=w\sigma u$ is a diffeomorphism on its image. And that the image contains a neighborhood of $W(n-1)\times PSU(1,1)\subset SU(n,1)$. To do this we consider a metric $d$ on $SU(n,1)$ which is invariant under the right action of $PSU(1,1)$ and left action of $W(n-1)$. One can obtain such a metric by starting with a right invariant Riemannian metric and then averaging with respect to the left action of (compact group) $W(n-1)$. 

Notice that $T_{id}\Sigma$, $\su(1,1)$ and ${\mathfrak w(n-1)}$ span the full Lie algebra $\su(n,1)$, and, hence, $P$ is a local diffeomorphism on the neighborhood of $(0,0,0)$. More precisely, by choosing appropriately small $\epsilon_0>0$ and $r>0$ we have that the restriction of $P$ to the neighborhood 
$$\{w\in W(n-1): d(w,id)<r\}\times \Sigma\times \{u\in PSU(1,1):  d(u, id)<r\}$$
 is a local diffeomorhism on its image. Further, because $P(w'w,\sigma, uu')=w'P(w,\sigma, u)u'$ we obtain that each point $P(w',0,u')$ has a neighborhood which has a uniform size (with respect to metric $d$) entirely contained in the image of $P$.

It remains to check that $P$ is one-to-one. Let
$$
\delta_0=\sup_{\sigma\in\Sigma}d(id,\sigma)
$$
Note that by choosing smaller $\epsilon_0$ we can make $\delta_0>0$ as small as desired. Assume that $P(w_1,\sigma_1,u_1)=P(w_2,\sigma_2, u_2)$, \ie
\begin{equation}
\label{eq_1}
w_2^{-1}w_1\sigma_1=\sigma_2u_2u_1^{-1}
\end{equation}
Then
$$
d(w_2^{-1}w_1,u_2u_1^{-1})\le d(w_2^{-1}w_1, w_2^{-1}w_1\sigma_1)+d(\sigma_2u_2u_1^{-1}, u_2u_1^{-1})= d(id, \sigma_1)+d(\sigma_2, id)\le2\delta_0
$$
Recall that $W(n-1)\times PSU(1,1)$ is (explicitly) properly embedded in $SU(n,1)$. Hence the last inequality implies that both $w_2^{-1}w_1$ and $u_2u_1^{-1}$ are close to $id$. On the other hand we have already shown that $P$ is a local diffeomorphism on the neighborhood of $id$. Hence~(\ref{eq_1}) implies that $w_2^{-1}w_1=id$, $u_2u_1^{-1}=id$ and $\sigma_1=\sigma_2$ proving that $P$ is injective.

Finally, we let $\Gamma_1$ act on $\Sigma\times PSU(1,1)$ by $\gamma_1\colon(\sigma, u)\mapsto (\sigma, u\gamma_1)$.
Our parametrization is equivariant with respect to the right action of 
$\Gamma_1$ and hence descends to a parametrization of a neighborhood of $T^1N\subset T^1M$ by $\Sigma\times PSU(1,1)/\Gamma_1\simeq \Sigma\times T^1N\simeq \mathbb D_{\epsilon_0}\times T^1N$.\footnote{Notice that in particular we have shown that the normal bundle of $T^1N$ in $T^1M$ is trivial. This happens because $W(n-1)\cap PSU(1,1)=\{Id\}$. It was pointed out to us by Mike Davis that in general the normal bundle of $N$ in $M$ is twisted and the twisting is controlled by the Chern class.}

\subsection{Proof of Corollary~\ref{cor_main}}
The Corollary~\ref{cor_main} follows from the Main Theorem provided that we verify the locally fiberwise assumption with respect to our parametrization.
We  write $v$ as a column vectors $v=(v_1,v_2)$ which parametrizes $\Sigma$. That is, 
$$
A(v_1,v_2)=\left(\begin{array}{cc}0& v\\ -J_0\bar v^\intercal & 0\end{array}\right)
$$
 and $\sigma(v_1,v_2)=\exp A(v_1,v_2)$.

Notice that 
\begin{multline*}
d_t\sigma(v_1,v_2)d_t^{-1}=d_t\exp A(v_1,v_2)d_t^{-1}\\
=\exp d_t A(v_1,v_2)d_t^{-1}=\exp A(e^{-t}v_1,e^tv_2)=\sigma(e^{-t}v_1,e^tv_2)
\end{multline*}
Now we can deduce the formula for the geodesic flow using the coordinates $(v_1,v_2,u)\in\Sigma\times PSU(1,1)$
\begin{multline*}
d_t(v_1,v_2,u)=W(n-1) d_t\sigma(v_1,v_2)u=W(n-1) d_t\sigma(v_1,v_2)d_t^{-1} d_tu\\
=(e^{-t}v_1,e^tv_2, d_tu)
\end{multline*}

We conclude that with respect to the coordinates $(v_1,v_2,u)$ the geodesic flow is the product of $(4n-4)$-dimensional hyperbolic saddle and the geodesic flow on a holomorphic curve. This verifies  the assumption of the Main Theorem on locally fiberwise structure of $d_t$ on $\U$. 

Finally to see that the partially hyperbolic flow $\tilde \phi^t$ could be chosen to be arbitrarily close to $\hat\phi^t\colon\widehat{T^1M}\to\widehat{T^1M}$ in $C^\infty$ topology recall that we obtain $\tilde \phi^t$ by blowing up the reparametrized flow $\phi^t_\rho$. The reparametrization is localized in the neighborhood of $T^1N$ and is given by~(\ref{eq_slow_down}). Function $\rho$ has to be chosen so that~(\ref{eq_domination}) holds:
\begin{equation*}
\left(\frac{\lambda'}{\mu'}\right)^{\rho_0}>\max(\lambda, \mu^{-1})
\end{equation*}
In the current setting $\lambda'^{-1}=\mu'=e$ and $\lambda^{-1}=\mu=e^2$. Hence any value of $\rho_0<1$ would work. It follows that the function $\rho$ can be chosen to be arbitrarily close to $1$ in the $C^\infty$ topology. Therefore $\phi^t_\rho$ can be arbitrarily $C^\infty$ close to $\phi^t$ and, accordingly, $\tilde\phi^t$ can be arbitrarily $C^\infty$ close to $\hat\phi^t$.

\subsection{Proof of Corollary~\ref{cor_main2}}
Corollary~\ref{cor_main2} does not immediately follow from Addendum~\ref{add}. The reason is that the pull-back of the Liouville volume form $p^*vol$ under parametrization $p$ has the form 
$$
\alpha(v_1,v_2)\omega_0\wedge vol_{T^1N},
$$
where $\omega_0$ is the standard volume form on $\D_{\epsilon_0}$ and $vol_{T^1N}$ is the Liouville volume form on $T^1N$.
Indeed, because the Liouville measure comes from the Haar measure on $SU(n,1)$ and $p$ is equivariant with respect to the right action of $PSU(1,1)$ the density $\alpha$ is independent of the $u$-coordinate. However, the dependence on $v_1$ and $v_2$ is non-trivial. Hence the Addendum~\ref{add} does not apply directly (cf. Remark~\ref{remark2}). Our approach is to replace the flow $\phi_t$ with a different flow $\bar \phi_t$ to which Addendum~\ref{add} can be applied. More precisely, on the neighborhood $\mathbb D_{\epsilon_0}\times T^1N$ we will let
\begin{equation*}
\label{new_flow}
\bar \phi^t=\bar h\circ \phi^t\circ \bar h^{-1}
\end{equation*}
where $\bar h=(h, id_{T^1N})$ and $h$ is $C^1$ small and tapers away to identity on the neighborhood of the boundary of $\mathbb D_{\epsilon_0}$.

Let $\omega_1=\alpha(v_1,v_2)\omega_0$. By rescaling $\omega_0$ if needed, we may assume that $\alpha(0,0)=1$. Denote by $a^t$ the saddle flow, $a^t(v_1,v_2)=(e^{-t}v_1, e^tv_2).$ Note that, because $\alpha$ is continuous and $a^t$-invariant, we also have $\alpha(0,v_2)=\alpha(v_1,0)=1$.
\begin{lemma}
\label{lemmata}
For all sufficiently small $\epsilon_1\in (0,\epsilon_0)$ there exists a  diffeomorphism $h\colon \D_{\epsilon_1}\to h(\D_{\epsilon_1})\subset \D_{\epsilon_0}$ such that $h_*\omega_1=\omega_0$ and $h$ commutes with the saddle flow, when defined:
$$h\circ a_t=a_t\circ h$$
\end{lemma}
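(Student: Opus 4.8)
The plan is to construct $h$ by a Moser-type deformation argument adapted so as to respect the saddle symmetry. Write $\alpha = \alpha(v_1,v_2)$, a smooth positive function on $\D_{\epsilon_0}$ with $\alpha(0,0)=1$ and, by $a^t$-invariance and continuity, $\alpha(0,v_2)=\alpha(v_1,0)=1$. Set $\omega_s = \big((1-s)\alpha + s\big)^{-1}\omega_1 = \big((1-s)\alpha+s\big)^{-1}\alpha\,\omega_0$ for $s\in[0,1]$, a path of volume forms from $\omega_1$ at $s=0$ to $\omega_0$ at $s=1$ (one checks $\omega_1$ and $\omega_0$ have the same total mass is \emph{not} needed since we only want a local diffeomorphism onto its image, not an automorphism of a fixed domain). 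First I would find a vector field $Z_s$ with $\iota_{Z_s}\omega_s = -\beta$, where $\beta$ is a fixed $(4n-5)$-form with $d\beta = \tfrac{d}{ds}\omega_s = \tfrac{d}{ds}\log\big((1-s)\alpha+s\big)^{-1}\cdot\omega_s$; such a primitive $\beta$ exists on the contractible neighborhood $\D_{\epsilon_0}$ because $\tfrac{d}{ds}\omega_s$ is exact (it is a top form on a ball with the right total integral when we shrink, or more simply we integrate the Poincar\'e homotopy formula). Then the time-one flow $h^{-1}$ of $Z_s$ satisfies $(h^{-1})^*\omega_0 = \omega_1$, i.e. $h_*\omega_1 = \omega_0$, and by shrinking $\epsilon_1$ the flow is defined on $\D_{\epsilon_1}$ and stays inside $\D_{\epsilon_0}$.

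The real content is making $h$ commute with $a^t$. For this the primitive $\beta$ and hence $Z_s$ must be chosen $a^t$-\emph{equivariant} in the appropriate sense: since $a^t$ preserves $\omega_0$ but rescales coordinates, I want $(a^t)^*\beta = \beta$ so that $(a^t)^*\omega_s = \omega_s$ and the deformation vector field $Z_s$ is $a^t$-invariant, which forces its flow $h$ to commute with $a^t$. To produce such a $\beta$ I would not use the naive radial Poincar\'e homotopy (which is not $a^t$-equivariant) but instead average: take any primitive $\beta_0$ of $\tfrac{d}{ds}\omega_s$ and replace it by a suitable average over the $a^t$-action, or, better, build the primitive directly using the hyperbolic structure of $a^t$ — write $\D_{\epsilon_0}\setminus\{v_1v_2\text{-axes}\}$ with the $a^t$-orbit foliation and integrate the exact form along orbits from the invariant axes $\{v_1=0\}\cup\{v_2=0\}$, where the relevant density is exactly $1$ by the boundary condition $\alpha(0,v_2)=\alpha(v_1,0)=1$. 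This is precisely where the hypothesis $\alpha\equiv 1$ on the axes is used: it guarantees the integrand $\tfrac{d}{ds}\log((1-s)\alpha+s)$ vanishes on the axes, so the orbit-integral converges and defines a smooth $a^t$-invariant primitive on all of $\D_{\epsilon_0}$ including the axes.

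The key steps, in order: (1) reduce to finding an $a^t$-invariant primitive $\beta$ of the fixed exact form $d(\log((1-s)\alpha+s)^{-1})\wedge\iota\cdots$, equivalently of $\eta := d\big(\text{the density path}\big)$ after contracting with $\omega_0$; (2) construct $\beta$ by integrating along $a^t$-orbits anchored at the invariant axes, using $\alpha|_{\text{axes}}=1$ to get smoothness and convergence; (3) solve $\iota_{Z_s}\omega_s = -\beta$ for $Z_s$, automatically $a^t$-invariant; (4) integrate $Z_s$ for $s\in[0,1]$, shrinking $\epsilon_1$ so the flow stays in $\D_{\epsilon_0}$, and set $h$ to be (the inverse of) the time-one map; (5) note $h\circ a^t = a^t\circ h$ follows from $a^t$-invariance of $Z_s$, and that $h$ tapers to the identity near $\partial\D_{\epsilon_1}$ after a further cutoff of $Z_s$ supported away from the boundary — since we only need $h$ defined and symmetric on a smaller ball and equal to identity near its boundary, we may multiply $Z_s$ by an $a^t$-invariant (hence radial) bump function, at the modest cost of $h_*\omega_1$ agreeing with $\omega_0$ only on a slightly smaller $\D_{\epsilon_1}$; this is harmless for the application. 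I expect step (2) — producing a genuinely $a^t$-equivariant smooth primitive across the invariant axes — to be the main obstacle, and the boundary normalization $\alpha(0,v_2)=\alpha(v_1,0)=1$ inherited from $a^t$-invariance of the Liouville density is exactly what makes it go through.
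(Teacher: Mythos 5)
Your overall framework---Moser trick run so that the generating vector field is $a^t$-invariant, hence the time-one map commutes with $a^t$---is the same as the paper's. The gap lies in the step you yourself flag as the main obstacle, namely step (2): actually producing an $a^t$-invariant primitive.

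Both mechanisms you propose fail. Averaging over the $a^t$-action is not available: $t$ ranges over all of $\R$ and the saddle flow has no compactness, so an integral of the form $\int_\R (a^t)^*\beta_0\,dt$ does not converge. Your fallback---``integrate the exact form along orbits from the invariant axes''---does not define anything either, because $a^t$-orbits never reach the axes: with $a^t(v_1,v_2)=(e^{-t}v_1,e^t v_2)$, an orbit with $v_1,v_2\neq 0$ lies on a level set of the invariant function $\|v_1\|\,\|v_2\|$, exits $\D_{\epsilon_0}$ in finite time in both directions, and neither emanates from nor limits onto $\{v_1=0\}\cup\{v_2=0\}$ inside the disk. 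So the boundary condition $\alpha|_{\mathrm{axes}}=1$ cannot serve the role you assign it; the integration path you want simply does not touch the axes. (Also, your multiplicative path $\omega_s$ has $s$-dependent $\dot\omega_s$, so you would need an $s$-parametrized primitive; the linear path used in the paper makes $\dot\omega_s=\gamma\omega_0$ constant in $s$ and a single primitive suffices.)

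The paper closes the gap with a different device. Writing $\gamma=\alpha-1$, it seeks a primitive of $\gamma\omega_0$ of the special form $\eta=\beta\,\eta_0$ where $\eta_0=x_1\,dx_2\wedge\cdots\wedge dx_k$ is checked to satisfy $\LL_X\eta_0=0$ and $d\eta_0=\omega_0$. Then $d(\beta\eta_0)=\gamma\omega_0$ reduces to the scalar ODE $\partial_{x_1}(x_1\beta)=\gamma$, solved explicitly by
$$
\beta(x_1,x_2,\dots)=\frac{1}{x_1}\int_0^{x_1}\gamma(q,x_2,\dots)\,dq=\int_0^1\gamma(tx_1,x_2,\dots)\,dt,
$$
which is manifestly smooth, and $a^t$-invariance of $\beta$ follows from $X\gamma=0$ by an integration by parts. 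In other words, the correct homotopy is a contraction in a \emph{single} stable (or unstable) coordinate of the saddle---not along $a^t$-orbits and not along rays through the origin---because this one-coordinate rescaling interacts with the linear saddle in exactly the way needed for the resulting primitive to be $X$-invariant. That explicit construction is the content your proposal is missing.
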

Before proving the lemma we first finish the proof of Corollary~\ref{cor_main2}. First extend $h\colon \D_{\epsilon_1}\to h(\D_{\epsilon_1})$ to a diffeomorphism $h\colon \D_{\epsilon_0}\to \D_{\epsilon_0}$ which equals to identity near the boundary. Then replace the geodesic flow $\phi^t$ with the new flow $\bar \phi^t$ by replacing the restriction $\phi^t|_{ \D_{\epsilon_0}\times T^1N}$ with $(h\circ a^t\circ h^{-1}, \phi^t_{T^1N})$. Clearly $\bar\phi^t$ is smoothly conjugate to $\phi^t$. Hence $\bar\phi^t$ is partially hyperbolic with $C^1$ splitting. Further, $T^1N$ remains $\bar\phi^t$-invariant and, because $h$ commutes with $a^t$ on $\D_{\epsilon_1}$ we have 
$$
\bar \phi^t(v_1,v_2,u)=\phi^t(v_1,v_2,u)=(a^t(v_1,v_2),\phi^t_{T^1N}(u))
$$
for $(v_1,v_2)\in \D_{\epsilon_1}$. Hence the locally fiberwise assumption is also verified for $\bar \phi^t$. On the neighborhood $\D_{\epsilon_1}\times T^1N$ the $\bar\phi^t$-invariant volume has the form $\bar h_*(\omega_1\wedge vol_{T^1N})=h_*\omega_1\wedge vol_{T^1N}=\omega_0\wedge vol_{T^1N}$ and hence the assumption of Addendum~\ref{add} is also verified. We conclude that Addendum~\ref{add} applies to $\bar\phi^t$ and yields Corollary~\ref{cor_main2}. \hfill $\square$

Hence it only remains to prove the Lemma.
\begin{proof}[Proof of Lemma~\ref{lemmata}]
The idea of the proof is perform an $a^t$-equivariant Moser trick.\footnote{While such trick is standard in the context of equivariant cohomology, when the acting group is compact, see e.g.~\cite{GS}, we were unable to locate any prior work on ``locally equivariant" Moser trick. While we only do it here for the saddle singularity, presumably it is much more general.} To obtain the diffeomorphism $h$ such that $h_*\omega_1=\omega_0$ consider the path $\omega_t=(1-s)\omega_0+s\omega_1$, $s\in[0,1]$. Then, by the Poincar\'e Lemma, there exists $\eta$ such that
$$
d\eta=\omega_1-\omega_0=\gamma\omega, \,\,\,\gamma=\alpha-1
$$
Further, we can choose $\eta$ to be $a^t$-invariant; that is, $\LL_X\eta=0$, where $X=\partial a^t/\partial t$. We proceed with the proof assuming this fact, which we will verify later via a direct calculation.

Because $\omega_s$ are non-degenerate forms the equation
$$
\iota_{Y_s}\omega_s=\eta,
$$
uniquely defines ``time-dependent vector field" $Y_s$.
Then, by Cartan's formula, we have for every $s\in[0,1]$
$$
\LL_{Y_s}\omega_s=(\iota_{Y_s}\circ d+d\circ \iota_{Y_s})\omega_s=d\beta
$$
Hence by integrating $Y_s$ we obtain a one-parameter family of diffeomorophisms $h_s$ such that
$$
(h_s)_*\omega_0=\omega_s
$$
 Recall that volume forms $\omega_s$ are invariant under $X$, \ie 
$\,\,
\LL_X\omega_s=0
$
Hence
$$
0=\LL_X\beta=\LL_X(\iota_{Y_s}\omega_s)=\iota_{Y_s}(\LL_X\omega_s)+\iota_{\LL_XY_s}\omega_s=\iota_{\LL_XY_s}\omega_s,
$$
which implies that $[X, Y_s]=\LL_XY_s=0$ because $\omega_s$ in non-degenerate. It follows from the Frobenius Theorem that $a^t$ commutes with $h_s$ as posited. Note that $h_s(0,0)=(0,0)$. It remains to set $h=h_1$ and restrict to a sufficiently small disk $ \D_{\epsilon_1}$ such that $ h(\D_{\epsilon_1})\subset  \D_{\epsilon_0}$.

Hence, to finish the proof of the Lemma it remains to show that the form $\eta$ can be chosen to be $a^t$ invariant. For the sake of notation we prove this fact only when $\dim\D_{\epsilon_0}=4$. The general case can be addressed in the same way. 

We use coordinates $(x_1,x_2,x_3, x_4)$. Then $\omega_0=dx_1\wedge dx_2\wedge dx_3\wedge dx_4$ and the generator of $a^t$ is given by
$$
X=-x_1\frac{\partial}{\partial x_1}-x_2\frac{\partial}{\partial x_2}+x_3\frac{\partial}{\partial x_3}+x_4\frac{\partial}{\partial x_4}
$$
First let $\eta_0=x_1dx_2\wedge dx_3\wedge dx_4$. Then $d\eta_0=\omega_0$ and, using Cartan formula $\LL_X\eta_0=\iota_X\omega_0+d\iota_X\eta_0$ it is straightforward to verify that $\LL_X\eta_0=0$, \ie  $\eta_0$ is $a^t$-invariant.

Our goal now is to find an $a^t$-invariant function $\beta$ such that $d(\beta\eta_0)=\gamma\omega_0$. We have
$$
d(\beta\eta_0)=\beta\eta_0+d\beta\wedge\eta_0=\beta\omega_0+x_1\frac{\partial\beta}{\partial x_1}\omega
$$
Hence we need to solve the equation
$$
\beta+x_1\frac{\partial\beta}{\partial x_1}=\frac{\partial}{\partial x_1}(x_1\beta)=\gamma
$$
for $\beta$. Then
$$
\beta(x_1, x_2, x_3, x_4)=\frac{1}{x_1}\int_0^{x_1}\gamma(q,x_2,x_3,x_4)dq
$$
is a solution. 

We check that $\beta$ is $a^t$-invariant. Let $\Gamma=\Gamma(x_1,x_2,x_3,x_4)=\int_0^{x_1}\gamma(q,x_2,x_3,x_4)dq$. Because $\gamma$ is $a^t$-invariant we have
\begin{multline*}
0=\int_0^{x_1}X\gamma(q,x_2,x_3,x_4)dq=-\int_0^{x_1}q\frac{\partial}{\partial q}\gamma(q,x_2,x_3, x_4)dq-x_2 \frac{\partial}{\partial x_2} \Gamma+x_3 \frac{\partial}{\partial x_3} \Gamma+x_4 \frac{\partial}{\partial x_4} \Gamma\\
=-x_1\gamma(x_1,x_2,x_3,x_4)+\Gamma(x_1,x_2,x_3,x_4)-x_2 \frac{\partial}{\partial x_2} \Gamma+x_3 \frac{\partial}{\partial x_3} \Gamma+x_4 \frac{\partial}{\partial x_4} \Gamma
=\Gamma+X\Gamma
\end{multline*}
where we used integration by parts and the fundamental theorem of calculus. Now differentiating $x_1\beta=\Gamma$ with respect to $X$ gives
$$
X(x_1)\beta+x_1X\beta=X\Gamma
$$
which yields
$$
x_1X\beta=X\Gamma+x_1\beta=X\Gamma+\Gamma=0.
$$
Hence $X\beta=0$.
Finally by the product formula
$$
\LL_X\beta\eta_0=X(\beta)\eta_0+\beta\LL_X\eta_0=0.
$$
\end{proof}

\end{document}